\newcounter{tempcolnum}
\newcommand{\multicolinterrupt}[1]{
\setcounter{tempcolnum}{\col@number}
\end{multicols}
#1%
\begin{multicols}{\value{tempcolnum}}
}
\theoremstyle{definition}
\newtheorem{definition}{Definition}[section]
\theoremstyle{theorem}
\newtheorem{theorem}{Theorem}[section]
\theoremstyle{proposition}
\newtheorem{proposition}{Proposition}[section]
\theoremstyle{lemma}
\newtheorem{lemma}{Lemma}[section]
\theoremstyle{corollary}
\newtheorem{corollary}{Corollary}[section]
\theoremstyle{conjecture}
\newtheorem{conjecture}{Conjecture}[section]
\theoremstyle{remark}
\newtheorem{remark}{Remark}[section]
\newcommand{\HorRule}{\color{DarkGoldenrod} \rule{\linewidth}{1pt}} 
\title{\LARGE On the neighborhood of knots}
\author{Eleni Panagiotou\\
School of Mathematical and Statistical Sciences, Arizona State University\\
Eleni.Panagiotou@asu.edu} 
\definecolor{issuePJA_color}{rgb}{1.0,0.0,0.0}
\definecolor{commentPJA_color}{rgb}{1.0,0.0,0.8}
\definecolor{commentEP_color}{rgb}{1.0,0.0,0.8}
\begin{document}

\maketitle 

\thispagestyle{fancy} 
\begin{abstract}
    This manuscript introduces a new framework for the study of knots by exploring the neighborhood of knot embeddings in the space of simple open and closed curves in 3-space. The latter gives rise to a knotoid spectrum, which determines the knot type via its knot-type knotoids. We prove that the pure knotoids in the knotoid spectra of a knot, which are individually agnostic of the knot type, can distinguish knots of Gordian distance greater than one. We also prove that the neighborhood of some embeddings of the unknot can be distinguished from any embedding of any non-trivial knot that satisfies the cosmetic crossing conjecture.  Topological invariants of knots can be extended to their open curve neighborhood to define continuous functions in the neighborhood of knots. We discuss their properties and prove that invariants in the neighborhood of knots may be able to distinguish more knots than their application to the knots themselves. For example, we prove that an invariant of knots that fails to distinguish mutant knots (and mutant knotoids), can distinguish them by their neighborhoods, unless it also fails to distinguish non-mutant pure knotoids in their spectra. Studying the neighborhood of knots opens the possibility of answering questions, such as if an invariant can detect the unknot, via examining possibly easier questions, such as whether it can distinguish height one knotoids from the trivial knotoid.
\end{abstract}

MSC 57K10, 57K12

\section{Introduction}

Knots are simple closed curves in 3-space classified by  topological equivalence \cite{knot-book,Rolfsen2003}. It is known that the topology of a knot imposes constraints on geometrical aspects of the knot. For example, the ropelength, stick number, Mobius energy of a knot, all capture geometric and topological aspects of knots \cite{Cantarella2002,Freedman1994,Negami1991}. By thinking of knot embeddings as elements in the space of simple open and closed curves in 3-space endowed with a notion of distance, we can determine the neighborhood of a knot embedding.  We propose a new method for capturing geometric and topological properties of knots, by studying the space of open curves in their neighborhoods.

Knots and links are usually studied via diagrams, without need to refer to any particular embedding. On the other hand, the characterization of open curves in 3-space is inherently 3-dimensional and dependent on the embedding, since an open curve in 3-space can be continuously deformed to any other conformation. Diagrams of open curves can be characterized by knotoids (a classification of open arc diagrams) \cite{Turaev2012}, yet, no one knotoid can in general describe an open curve in 3-space. Rail approximations of open curves, closure approximations and knotoid approximations of open curves in 3-space have been proposed \cite{Turaev2012, Sulkowska2012,Goundaroulis2017}. The latter method proposes a knotoid spectrum (similar to the knot closure spectrum introduced in \cite{Millett2005}) for an open curve from which a dominant knotoid is extracted. An accurate description of an open curve in 3-space can be obtained as a superposition of knotoids with associated geometric probabilities \cite{Panagiotou2021,Panagiotou2020b,Barkataki2022}. In that way, the rigorous definition of measures of topological complexity of open curves in 3-space has been possible \cite{Panagiotou2021,Panagiotou2020b}.  These are continuous functions in the space of configurations of simple open curves in 3-space that tend to  topological invariants as the endpoints of the curve tend to coincide.

In this manuscript we define the neighborhood of a knot embedding in the space of open curves in 3-space. The $h$-neighborhood of a knot embedding consists of open curves at distance less than $h$ from the knot. Via the knotoid spectrum of every open curve in the neighborhood of a knot embedding, we can associate a knotoid spectrum to the knot embedding. The open neighborhoods of knots are sensitive both on the embedding and on the knot type. For any embedding of a knot, its neighborhood determines the knot type via the knot-type knotoid in its spectra. More importantly, we prove that any two embeddings of knots of Gordian distances greater than one can be distinguished by the pure knotoids in their neighborhood knotoid spectra, which are individually agnostic of the knot type. We also prove that there exist embeddings of the unknot whose pure knotoid spectra can be distinguished from that of any embedding of any non-trivial knot that satisfies the cosmetic crossing conjecture. Measures of entanglement at the neighborhood of knots give a topological and geometrical characterization of the knot that may be able to distinguish more knot types than its application to the knots themselves. We prove that an invariant that cannot distinguish mutant knots and mutant knotoids, can distinguish the neighborhoods of mutant knot embeddings, unless it also fails to distinguish non-mutant pure knotoids in their spectra. We also prove that an invariant that cannot distinguish the unknot, it can distinguish at least some embeddings of the unknot from any embedding of a non-trivial knot that satisfies the cosmetic crossing conjecture, unless it fails to distinguish height one knotoids from the trivial knotoid. These results point to a new approach towards answering questions in knot theory via their neighborhood characteristics.

The manuscript is organized as follows: Section 2 contains background information. Section 3 introduces the neighborhood of a knot and its properties.  Section 4 discusses the properties of knot invariants in the open curve neighborhood of a knot. Section 5 presents the conclusions of this study.

\section{Knots, knotoids and virtual knots}

This section gives an overview of the basic definitions regarding knots, knotoids, virtual knots and their properties \cite{Kauffmanvirtual1999, Gugumcu2017}.

\begin{definition}(knot)
    A knot is a simple closed curve in 3-space. Knots are classified under the notion of topological equivalence. A knot diagram can be thought of as a projection of a knot embedding, which keeps the information of over/under at the double points (crossings). Knots can be classified by studying their diagrams, which are classified under Reidemeister moves and planar isotopy \cite{knot-book} (see Figure \ref{vr1}).
\end{definition} 

\begin{definition}[knotoids, knot-type knotoids, pure knotoids]
    Knotoids are  classes of open  ended  knot  diagrams (see Figure \ref{fig:knotoids}) \cite{Turaev2012}. A knotoid diagram is a particular instance of a knotoid. The three  \textit{Reidemeister  moves} of knots  are  defined  on knotoid  diagrams by  modifying  the  diagram within small surrounding disks that do  not  utilize  the  endpoints (see Figures \ref{vr1} and \ref{fig:knotoids}).  
Two knotoid diagrams are said to be equivalent if they are related to each other by a finite sequence of such moves (and isotopy of $S^2$, $\mathbb{R}^2$ for knotoid diagrams in $S^2$, $\mathbb{R}^2$, respectively). A knot-type knotoid is a knotoid for which there is an end-to-end closure arc that does not intersect the rest of the diagram. A knotoid that is not knot-type is a pure knotoid. 
\end{definition}

A 3D interpretation of knotoids is possible via a rail construction \cite{Turaev2012}. Namely, consider two line segments perpendicular to the plane of the knotoid diagram that intersect its endpoints. By allowing the endpoints slide on the respective lines, respecting uncrossability, one obtains a 3D rail construction that corresponds to the same knotoid class. One can close up the parallel lines at two antipodal points away from the diagram of the knotoid to obtain a theta graph and study the equivalence class of theta graphs or the over/under closures of the knotoid \cite{Turaev2012}. 

\begin{figure}[ht!]
   \begin{center}
     \includegraphics[width=0.75\textwidth]{./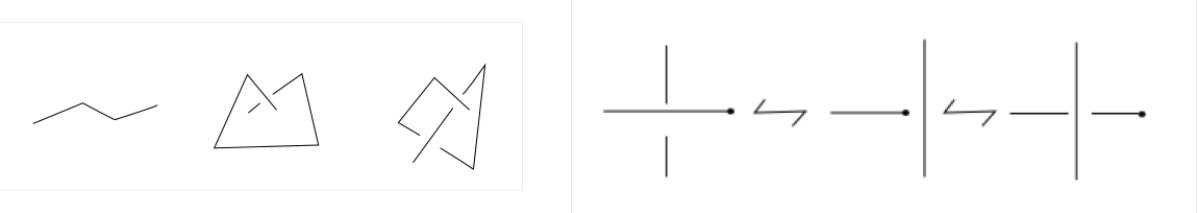}
      \end{center}
     \caption{(Left) Examples of (polygonal) knotoids (open simple arc diagrams). Notice that knotoids refer to \textit{projections of open chains}, while knots refer to closed chains in 3-space. (Right) Forbidden moves on knotoids. Knotoids are classified via Reidemeister moves and the forbidden moves.}
     \label{fig:knotoids}
\end{figure}

\begin{definition}(\textit{Virtual knot/link} and \textit{virtual knot/link diagram})
A \textit{virtual knot/link diagram} consists of generic closed curves in
$\mathbb{R}^2$ (or $S^2$) such that each crossing is either a classical crossing with over and under arcs, or
a virtual crossing without over or under information. Virtual knot/link diagrams are classified using the generalized Reidemeister moves, which include the classical Reidemeister moves and the virtual Reidemeister moves (See Figure \ref{vr1}). A \textit{virtual knot/link} is defined as an equivalence class of virtual knot/link diagrams under the generalised Reidemeister moves \cite{Kauffmanvirtual1999}. 
 \label{virtual_knot/link}
\end{definition}

Virtual knots also have a 3D interpretation. A representation of a virtual link, denoted $(F, L)$,
is an embedding of the link $L$ in $F \times I$ where $F$ is a closed, two dimensional,
oriented surface modulo Dehn twists, isotopy of the link with in $F \times I$, and
handle addition/subtraction \cite{Kauffmanvirtual1999,Kauffmanvirtual2001}. It was proved that every stable equivalence class of links in thickened surfaces has
a unique irreducible representative \cite{Kuperberg2003}.

\begin{figure}[ht]
    \centering
\includegraphics[scale=0.75]{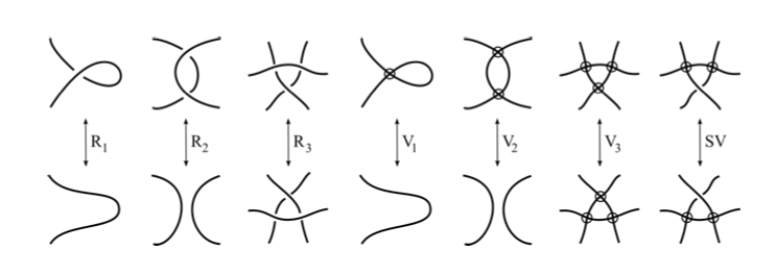}
    \caption{Generalised Reidemeister moves on virtual knots/links : the classical Reidemeister moves, $R_1$, $R_2$,
$R_3$; the virtual moves, $V_1$, $V_2$, $V_3$; and the semi-virtual move $SV$. (Figure from \cite{Barkataki2024}.)}
    \label{vr1}
\end{figure}

Knotoids and linkoids are connected to virtual knots and links via a closure of knotoids/linkoids with arcs that introduce only virtual crossings. Let $K$ be the set of knotoids and let $VK$ be the set of virtual knots and let $\phi:K\rightarrow VK$, denote the map that sends a knotoid to a virtual knot via a closure arc. Then $\phi$ is not surjective nor injective \cite{Turaev2012,Gugumcu2020}. Similar results exist for linkoids \cite{Barkataki2024,Adams2022}.

\begin{definition}[height, virtual crossing number]
     We will call the minimim number of crossings of the end-to-end closure arc in a knotoid diagram, its diagrammatic height. The height of a knotoid is the minimal diagrammatic height over all of its diagrams. 
    Similarly, in terms of virtual knots, the end-to-end closure arc will create a diagrammatic virtual crossing number. The virtual crossing number of a virtual knot is the minimum of diagrammatic virtual crossing numbers over all of its diagrams.
\end{definition}

\begin{remark}
    All knotoids correspond via virtual closure to knots that can be embedded in the thickenned torus. Indeed, any end-to-end closure arc of a knotoid can be represented in a thickenned surface  by adding a handle that follows the closure arc over all arcs involved in the crossing, giving an embedding in a thickenned torus. See also \cite{Korablev2017}.
\end{remark}

\begin{remark}
  Knotoids are also related to strongly invertible knots via a double branched covering \cite{Barbensi2022,Adams2022}.  
\end{remark}

\subsection{Essential secants of knots}

The secants of a knot embedding can help characterize the knotoid spectra in its open curve neighborhood, which is defined in the next section. We present the necessary definitions and some of the results that will be used regarding secants of knots \cite{Denne2006,Cantarella2003,Detoffoli2013}.

\begin{definition}[Essential and strongly essential secant, n-secant]
Let $\alpha, \beta, \gamma$ be three disjoint simple arcs from $p$ to $q$, forming a knotted virtual $\Theta$-graph, and consider the knot $K=\alpha\cup\gamma$ and suppose that $\beta$ is a straight segment, so that it can be thought of as a secant of the knot $K$ connecting the points $p, q$. 
Let $X:=\mathbb{R}\setminus(\alpha\cup\gamma)$ and consider a parallel curve $\delta$ to $\alpha\cup\beta$ in $X$ (meaning that $\alpha\cup\beta$ and $\delta$ cobound an annulus embedded in $X$). We choose the parallel to be homologically trivial in $X$. Let $x_0\in\delta$ near $p$ be a basepoint for $X$, and let $h=h(\alpha,\beta,\gamma)\in\pi_1(X,x_0)$ be the homotopy class of $\delta$. Then $(\alpha,\beta,\gamma)$ is inessential if $h$ is trivial. A secant arc A is inessential if either $\alpha$ or $\beta$ are homotopically trivial in $X$, otherwise it is essential. A secant arc is essential when both $\alpha$ and $\beta$ are not homotopically trivial. Now let $\lambda$ be a meridian loop (linking $\alpha\cup\gamma$ near $x_0$) in the knot complement. If the
commutator $[h(\alpha,\beta,\gamma),\lambda]$ is nontrivial then we say $(\alpha,\beta,\gamma)$ is strongly essential. The definition of (strongly) essential secant of a knot in $R^3$ can be associated to a crossing of a knot diagram. Namely, a projection of $\gamma$ with respect to a secant $\beta$, leads to a crossing in the diagram that we call (strongly) essential if the corresponding secant is (strongly) essential. 

An $n$-secant $a_1a_2\dotsc a_n$ is essential if we have $a_ia_{i+1}$ essential for each $i$ such that one of the arcs $\gamma_{a_ia_{i+1}}$ and $\gamma_{a_{i+1}a_i}$ includes no other $a_j$. An alternating quadrisecant is an essential 4-secant, where the order of the points on the secant are $a_{i-1}a_{i+1}a_ia_{i+2}$, where $a_{i-1]}, a_i, a_{i+1}, a_{i+2}$ is the order of the points along the knot. An alternating quadrisecant is essential if the middle secant $a_{i+1}a_i$ is essential \cite{Denne2006,Kuperberg2003,Pannwitz1933}. 

\end{definition}

It has been proved that every non-trivial knot has an essential quadrisecant \cite{Kuperberg1994} and similarly for an essential alternating quadrisecant \cite{Denne2006}. It is also proved that any diagram of a non-trivial knot has a strongly essential crossing \cite{Detoffoli2013}.

\begin{definition}
    A crossing in a knot diagram is nugatory if there is a circle in the
sphere of the diagram that intersects the diagram transversally at that crossing and does
not have any other intersection points with the diagram. Changing this crossing does not change the knot type. A non-nugatory crossing whose change does not change the knot type is called a cosmetic crossing.
\end{definition}

The following Conjecture is attributed to \cite{Lin2012} and has been proven to be true for several classes of knots \cite{Scharlemann1989,Kalfagianni2012,Torisu1999,Balm2012,Balm2016,Lidman2017}. 

\begin{conjecture} [Cosmetic crossing conjecture]
    If $K$ admits a crossing change at a crossing $c$ which preserves the oriented isotopy class of the knot, then $c$ is nugatory.
\end{conjecture}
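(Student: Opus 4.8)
The first thing I would observe is that this statement is a well-known open conjecture rather than a result to be established here: the manuscript itself treats it as a hypothesis, since several of its later theorems are stated for knots that ``satisfy the cosmetic crossing conjecture.'' A genuine proof in full generality is not expected, so what I can realistically outline is the reformulation and obstruction strategy by which the cited special cases \cite{Scharlemann1989,Kalfagianni2012,Torisu1999,Balm2012,Balm2016,Lidman2017} have been handled, together with the point at which any attempt runs aground.

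First I would pass from crossings to surgery. A crossing change at $c$ is realized by $\pm 1$ Dehn surgery on a crossing circle $C$, an unknot bounding a disk that meets $K$ transversally in two points of opposite sign. The crossing $c$ is nugatory exactly when $C$ bounds a disk in $S^3\setminus K$, i.e. when $C$ is inessential in the knot complement; this is precisely the essential/strongly essential language introduced above, and \cite{Detoffoli2013} guarantees that every non-trivial diagram carries a strongly essential crossing. In these terms the conjecture asserts: if surgery on $C$ returns a knot orientedly isotopic to $K$, then $C$ must be inessential. The task is thereby recast as ruling out an essential crossing circle whose surgery is cosmetic.

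Next I would search for an invariant $I$ that is preserved by the hypothesized isotopy yet must change under any genuine non-nugatory crossing change on the class at hand. In the known cases the role of $I$ has been played by the Seifert genus, controlled through Gabai's sutured-manifold theory and thin-position arguments; by the first derivatives of the Jones polynomial with twist-region estimates; and by knot Floer homology together with the Alexander polynomial and the signature. The schematic contradiction is then: assume $c$ is cosmetic and non-nugatory, so $C$ is essential; use the surgery picture to constrain the behaviour of $I$; and show that essentiality of $C$ forces $I$ before and after to differ, contradicting its invariance under isotopy.

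The hard part, and the reason the conjecture remains open, is precisely this last implication in full generality: no known invariant is simultaneously sensitive enough to register \emph{every} non-nugatory crossing change and rigid enough to be literally fixed by the ambient isotopy. Each successful case exploits extra structure --- small genus, two-bridge or alternating form, fiberedness, Floer-theoretic thinness --- to pin down the crossing circle, and exactly this structure is unavailable for a general knot. The essentiality of $C$ supplied by the secant theory above does not by itself force any presently known quantity to move. Consequently I would not attempt a general proof but, as the manuscript does, invoke the statement as a conditional hypothesis for the neighborhood-based results that follow.
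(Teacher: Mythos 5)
You are right that this statement is an open conjecture, not a theorem of the manuscript: the paper attributes it to the literature, cites the special cases where it is known, and thereafter only invokes it as a hypothesis (``knots that satisfy the cosmetic crossing conjecture''), so there is no proof in the paper to compare against. Your decision to decline a general proof and treat the statement as a conditional assumption is exactly the stance the paper itself takes, and your sketch of the surgery reformulation and the obstruction strategy behind the cited partial results is accurate context rather than a gap.
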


\begin{lemma}\label{strongly}
    Let $K$ be a knot for which the cosmetic crossing conjecture is true. The conversion of a single strongly essential crossing of a knot diagram of $K$ to a virtual crossing gives rise to a virtual knot of virtual crossing number equal to 1.
\end{lemma}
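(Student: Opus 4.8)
The virtual crossing number is bounded above by $1$ at once: virtualizing a single crossing produces a diagram with exactly one virtual crossing, so the virtual crossing number is at most $1$. Hence the whole content of the lemma is the lower bound, namely that the resulting virtual knot $V$ is \emph{not} classical. I would argue this by contradiction: assume $V$ is classical (virtual crossing number $0$) and derive that the chosen crossing $c$ must be nugatory, contradicting its strong essentiality.

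The plan is to translate ``$V$ is classical'' into a statement about the secant $\beta$. Virtualizing $c$ records $\beta$ as a handle: rather than passing under the over-strand, the under-strand is routed over a $1$-handle attached along $\beta$, so $V$ is represented by $K=\alpha\cup\gamma$ sitting in a thickened torus $T\times I$ whose handle follows $\beta$. By the standard destabilization criterion, $V$ is classical if and only if this genus-one representative destabilizes, i.e.\ the over-strand can be isotoped off a nonseparating curve so that the handle becomes removable. The first, homotopy-level obstruction to such a destabilization is exactly the class $h=h(\alpha,\beta,\gamma)\in\pi_1(X,x_0)$: if the handle could be absorbed by an ambient isotopy pushing $\delta$ to a point, then $h$ would be trivial, i.e.\ the secant would be inessential. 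Essentiality therefore rules out the naive destabilization, and the subtler possibility -- a destabilization that first slides the strand using the handle itself -- is obstructed precisely by the nonvanishing of the commutator $[h,\lambda]$ with the meridian $\lambda$. Thus strong essentiality blocks every destabilization visible within this fixed thickened-torus representative.

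The cosmetic crossing conjecture then disposes of the one remaining route to classicality: that $V$ becomes classical only after first altering its classical part by an isotopy realizing a knot-type-preserving crossing change at $c$. Concretely, if $V$ were classical, removal of the single virtual crossing would be insensitive to the over/under data, forcing the two classical fillings of that crossing to agree as knot types; that is, changing $c$ would preserve the oriented isotopy class of $K$. By the cosmetic crossing conjecture, assumed for $K$, such a crossing is nugatory. But a nugatory crossing carries a separating circle meeting the diagram only at $c$, which makes one of $\alpha,\beta$ homotopically trivial in $X$ and hence $h$ trivial; so a nugatory crossing is inessential, contradicting strong essentiality. Combining the two obstructions, $V$ cannot be classical, so its virtual crossing number is at least $1$, and therefore equal to $1$.

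I expect the main obstacle to be the bookkeeping of the middle step: rigorously identifying ``$V$ classical'' with destabilization of the specified thickened-torus representative, and, most delicately, verifying that it is exactly the commutator $[h,\lambda]$ -- strong, rather than mere, essentiality -- that obstructs those destabilizations permitted to use the handle. Getting the homotopy-versus-isotopy distinction right, and cleanly separating the case closed by essentiality from the residual case closed by the cosmetic crossing conjecture, is where the real work lies; by contrast the upper bound and the implication ``essential $\Rightarrow$ non-nugatory'' are routine.
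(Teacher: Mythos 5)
Your upper bound is the paper's, and so is your closing chain (crossing change at $c$ preserves the knot type $\Rightarrow$ $c$ is nugatory by the cosmetic crossing conjecture $\Rightarrow$ $c$ is not strongly essential, contradiction). The genuine gap is the middle step, which you flag yourself but then lean on anyway: the passage from ``$V$ is classical'' to ``the two classical fillings of $c$ give the same knot type.'' You assert that classicality makes removal of the virtual crossing ``insensitive to the over/under data,'' but you supply no mechanism, and the scaffolding you erect around it --- a two-case analysis in which strong essentiality is supposed to obstruct every destabilization of a thickened-torus representative, with the cosmetic crossing conjecture mopping up a ``remaining route'' --- is both unproved (you concede this is where the real work lies) and unnecessary. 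It is also unclear that your two cases are well defined or exhaustive.

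The paper gets the middle step cleanly through knotoids, with no case split: opening one of the two arcs at $c$ exhibits $V$ as the virtual closure of a knotoid $k$; if $V$ had virtual crossing number $0$, then $k$ would be of knot type (this uses the fact that pure knotoids have non-classical virtual closures), and for a knot-type knotoid the over-closure and under-closure coincide --- but those two closures are precisely $K$ and the diagram with $c$ switched, so the crossing change would be cosmetic. From there your final two sentences finish the argument (the paper cites Corollary 2.41 of De Toffoli--et al.\ for ``nugatory $\Rightarrow$ not strongly essential'' rather than re-deriving it). To repair your write-up, delete the destabilization discussion entirely and replace the ``insensitive to over/under data'' assertion with this knotoid-closure observation.
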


\begin{proof}
Let $K_{\vec{\xi}}$ denote the diagram of a knot, of type $\kappa$. Let $c$ be a strongly essential crossing of $K_{\vec{\xi}}$.  After converting the crossing $c$ to virtual, the diagrammatic virtual crossing number of the resulting virtual knot, $VK_{\vec{\xi}}$, is 1, thus its virtual crossing number is less than or equal to 1. This virtual knot can be obtained as the closure of two knotoids, $k_1,k_2$ (depending on which arc is opened).  If the virtual crossing number of $VK_{\vec{\xi}}$ is 0, then the corresponding knotoids, $k_1,k_2$, are of knot-type and their over/under closures give the same knot, $\kappa$. That implies that the knot would be invariant under crossing change at the crossing c. Since the knot is assumed to satisfy the cosmetic crossing conjecture, the crossing is nugatory. By Corollary 2.41 in \cite{Detoffoli2013}, a nugatory crossing cannot be strongly essential, contradiction.

\end{proof}

\section{Open curve neighborhoods of knots}

In the following we consider the set $\Omega$ of oriented simple open and closed curves in 3-space that can be approximated by polygonal curves. For closed curves, these correspond to tame knots, we denote $TM$. Let $X\subset \Omega$ denote the space of simple open curves in 3-space. The boundary of $\Omega$  is then singular (open or closed) curves in 3-space. 
We will also be interested in a subset of $X$ which consists of open curves in 3-space, whose end-to-end closure arc has non-empty intersection with the curve, we denote $SX$.
We will particularly focus on a subset of $\Omega$, that of non-degerate polygonal approximations of open and closed curves in 3-space, we denote $\Omega_P$ \cite{Denne2006}:

\begin{definition}
    A non-degenerate polygonal curve $K$ is generic if it satisfies the following genericity conditions: (i) Given any three pairwise skew edges $e_1,e_2,e_3$ of $K$ and the doubly-ruled surface $H$ that they generate, no vertex of $K$ (except endpoints of $e_1,e_2,e_3$) may lie in $H$. (ii) There are no quintiscecants (or higher order secants). (iii) There are no vertex trisecant lines which lie in the osculating plane of the vertex.
\end{definition}

\noindent The set of non-degenerate polygonal curves of $n$ edges is an open dense set in $R^{3n}$ \cite{Denne2006}.

\begin{definition}
    For any two open piecewise linear curves in 3-space, $l_1,l_2 \in X$, defined by vertices $\lbrace x_1,\dotsc,x_n\rbrace$ and $\lbrace y_1,\dotsc,y_n\rbrace$, respectively, we define their distance to be their Hausdorff distance $D(l_1,l_2)=\max_{x\in l_1}\lbrace\min_{y\in l_2}\lbrace d(x,y)\rbrace\rbrace$.
\end{definition}

\subsection{Knotoid Spectrum of a simple open curve in 3-space}

In this section we discuss the knotoid spectrum of a simple open curve in 3-space and its properties.

The knot spectrum of an open curve in 3-space, originally defined in \cite{Millett2004,Millett2005} and applied to proteins in \cite{Sulkowska2012} was extended to the knotoid spectrum of a protein in \cite{Goundaroulis2017}. Below follows the definition of the knotoid spectrum and some similar definitions that stem from it:

\begin{definition}[Knotoid spectrum of an open curve in 3-space]
    Let $l\in X$ denote an open curve in 3-space. For any $\vec{\xi}\in S^2$ (except a set of measure zero), $l_{\vec{\xi}}$ is a knotoid. The set of knotoids that are associated with $l$ for all $\vec{\xi}\in S^2$ is the knotoid spectrum of $l$, we denote $kspec(l)$ (see Figure \ref{neighborhood}). Each knotoid in the spectrum of an open curve can be associated with a geometric probability of a projection of the curve giving that knotoid. We will call the set of knotoids of $l$ and their associated probabilities, the geometric spectrum, we denote $gspec(l)$. The virtual spectrum of $l$ is the set of virtual knots that appear as virtual closures of the knotoids in its knotoid spectrum, we denote $vspec(l)$. 
\end{definition}

\begin{remark}
    Similarly, we can also define the knot spectrum of a knot via the closure methods at the sphere at infinity, the theta curve spectrum or the strongly invertible knot spectrum of an open curve in 3-space. 
\end{remark}

\begin{lemma}\label{const}
    The knotoid spectrum of an open curve in 3-space is a finite set and it is locally constant in $X$.
\end{lemma}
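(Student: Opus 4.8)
The plan is to prove two things: that $kspec(l)$ is a finite set for each fixed $l \in X$, and that the map $l \mapsto kspec(l)$ is locally constant, i.e., each $l$ has a neighborhood (in the Hausdorff metric) on which the knotoid spectrum does not change. I would organize the argument around the projection structure map $\vec{\xi} \mapsto l_{\vec{\xi}}$ from $S^2$ to the set of knotoids, reducing both claims to the same underlying fact: that the combinatorial type of a projection of a generic polygonal curve is stable under small perturbations of both the projection direction and the vertices of the curve.

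For finiteness, the first step is to note that $l$ is a polygonal curve with finitely many edges (by the setup, curves in $X$ are piecewise linear with vertices $\{x_1,\dots,x_n\}$). For a generic direction $\vec\xi$, the projection $l_{\vec\xi}$ is a knotoid diagram with at most $\binom{n}{2}$ crossings, and its knotoid type is determined by the finite combinatorial data of which edges cross and the over/under sign at each crossing. The set of "bad" directions---those for which the projection fails to be a proper diagram (an edge projecting to a point, three edges projecting to a common point, an endpoint landing on a strand, etc.)---is a finite union of great circles and lower-dimensional sets, hence of measure zero, consistent with the hypothesis. On the complement, $S^2$ is partitioned into finitely many open regions by these bad loci, and I would argue the knotoid type is constant on each region (as the crossing data cannot change without passing through a bad direction). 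Since there are finitely many regions, $kspec(l)$ is finite.

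For local constancy in $X$, fix $l$ and consider a curve $l'$ with $D(l,l')$ small. The key observation is that the arrangement of bad great circles on $S^2$ depends continuously on the vertices, and that for $\vec\xi$ in the interior of any region the combinatorial type of the projection is robust: there is a positive lower bound on how far the vertices may move before any crossing is created, destroyed, or switched. I would make this quantitative by using the genericity/non-degeneracy conditions to get uniform transversality---strands cross transversally, no tangencies---so a sufficiently small Hausdorff perturbation $l'$ induces, for each region, a projection $l'_{\vec\xi}$ of the same knotoid type, and moreover the bad-locus arrangement of $l'$ is a small deformation of that of $l$ with the same region structure. Hence every knotoid appearing for $l$ still appears for $l'$, and no new one appears, giving $kspec(l') = kspec(l)$.

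The main obstacle is making the local constancy argument uniform over all of $S^2$ simultaneously: near the bad loci (great circles of tangency and triple-point directions) the combinatorial type changes, so I cannot get a single perturbation bound pointwise and then naively take a minimum over the noncompact set of good directions. The careful step is to cover $S^2$ by the closures of the finitely many good regions and handle the boundary loci by showing that the knotoid type on $l'$ for a direction $\vec\xi$ near a bad circle still agrees with the type on the adjacent region for $l$, because passing a tangency or triple point in either the direction-variable or the vertex-variable produces the same Reidemeister-II or Reidemeister-III move and hence the same knotoid class. I would phrase this by treating $(l,\vec\xi)$ jointly: the set of pairs yielding a bad projection is a codimension-one semialgebraic set in the product $\mathbb{R}^{3n}\times S^2$, and its complement has finitely many connected components on each of which the knotoid type is locally constant; a Hausdorff-small move of $l$ with $\vec\xi$ ranging over $S^2$ stays within a uniform neighborhood of this set, which is enough to conclude the spectrum is unchanged.
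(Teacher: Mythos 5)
Your argument follows essentially the same route as the paper's: finiteness comes from the finite arrangement of great circles (in the sense of Banchoff) cutting $S^2$ into finitely many regions on each of which the knotoid type is constant, and local constancy comes from the continuous dependence of that arrangement on the vertex coordinates, so that a sufficiently small perturbation deforms the regions without making any vanish or creating new intersections between pairs of great circles. Your handling of the wall-crossing behavior is more detailed than the paper's one-sentence version; the only wobble is the claim that crossing a wall yields ``the same knotoid class'' --- a wall where an endpoint passes over a strand is a forbidden move and does change the class --- but this is not load-bearing, since your conclusion rests (correctly) on the preservation of the region structure rather than on invariance of the class across walls.
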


\begin{proof}
   Let $l\in X$ be a piecewise linear open curve in 3-space of $n$ edges.  The probability of a particular knotoid in the spectrum occuring in a projection is defined by a spherical area whose boundary is formed by the intersection of great circles defined by the vertices and edges of $l$ \cite{Banchoff1976}. There are finitely many such great circles.  
   Due to continuity of the boundary of the spherical area on the coordinates of the curve, there is always small enough deformation so that the spherical areas corresponding to the distinct knotoids in the spectrum change but do not vanish or create intersections between new pairs of great circles.
\end{proof}

\begin{lemma}\label{knottypespec}
Let $l\in X$ be an open curve in 3-space, then

\noindent (i) If a projection of $l$ gives a knot-type knotoid, $K$, then this is the unique knot-type knotoid in the spectrum of $l$.

\noindent (ii) For all $l\in X\setminus SK$, $kspec(l)$ contains a knot type knotoid.

\end{lemma}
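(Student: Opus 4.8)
The plan is to prove both parts by passing between the combinatorics of knot-type knotoids and the $3$-dimensional closure of the open curve $l$. The main tool for (i) is Turaev's bijection between knot-type knotoids in $S^2$ and ordinary knot types \cite{Turaev2012}: every knot-type knotoid $K$ has a well-defined underlying knot $\kappa(K)$, and two knot-type knotoids coincide if and only if their underlying knots coincide. Thus (i) reduces to showing that the underlying knot of any knot-type knotoid appearing in $kspec(l)$ is one and the same, independent of the projection direction.

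First I would identify this underlying knot with the closure of $l$ at the sphere at infinity, $\hat l$. Fix $\vec\xi\in S^2$ for which $l_{\vec\xi}$ is knot-type. By definition some diagram equivalent to $l_{\vec\xi}$ admits an end-to-end closure arc $c$ disjoint from the rest of the diagram, so the two endpoints lie in a common region of $S^2_{\vec\xi}\setminus l_{\vec\xi}$. Since knotoid equivalence on $S^2$ includes ambient isotopy of $S^2$, I may arrange this common region to contain the point at infinity of the plane, so that $c$ passes through $\infty_P$; lifting $c$ to an arc $\tilde c$ that meets $l$ only at $p,q$ (possible because $c$ crosses no strand of the diagram), the union $l\cup\tilde c$ is exactly the closure $\hat l$ of $l$ through the point $\infty\in S^3$. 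As the closure arc crosses no strand, its over- and under-closures agree, giving $\kappa(l_{\vec\xi})=[\,l\cup\tilde c\,]=\hat l$. The knot $\hat l$ is manifestly independent of $\vec\xi$, so every knot-type knotoid in $kspec(l)$ has underlying knot $\hat l$, and the bijection above forces them all to be the single knot-type knotoid determined by $\hat l$, proving (i).

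For (ii) I would exhibit a direction for which $l_{\vec\xi}$ is knot-type. Since $l\in X\setminus SX$, the straight end-to-end segment $\sigma$ from $q$ to $p$ meets $l$ only at its endpoints, so $\sigma^\circ$ and $l$ are disjoint compact sets and a tube $T$ of small radius about $\sigma$ satisfies $T^\circ\cap l=\emptyset$ away from $p,q$. I would then project along a direction $\vec\xi$ making a small angle with $\sigma$: the image of $\sigma$ is a short segment joining $\bar p$ and $\bar q$, and the image of $T$ is a small planar region disjoint from $l_{\vec\xi}$ except for the two strands of $l$ emanating from $\bar p$ and $\bar q$. Hence $\bar p$ and $\bar q$ lie in a common region of the diagram complement and can be joined through this corridor by a closure arc disjoint from the diagram, so $l_{\vec\xi}$ is a knot-type knotoid (with underlying knot $[\,l\cup\sigma\,]=\hat l$), which is exactly the assertion of (ii).

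The main obstacle is the identification step in (i): showing that the underlying knot of a knot-type knotoid in the spectrum really is the direction-independent closure $\hat l$ and not some direction-dependent closure. The delicate point is that, for a given diagram, the endpoints may sit in a bounded region, so one must genuinely use knotoid equivalence on $S^2$ to move the disjoint closure arc out to $\infty_P$ without creating crossings; equivalently, one must check that every crossingless closure of $l_{\vec\xi}$ is isotopic rel endpoints, in the complement of $l$, to the standard arc through $\infty$. Once this is in hand the over- and under-closures automatically coincide with $\hat l$ and both parts follow. In (ii) the only point requiring care is genericity of the near-parallel direction, so that the projected tube is an embedded corridor and the two emanating strands do not block passage between $\bar p$ and $\bar q$; this holds for all directions near $\sigma$ outside a set of measure zero, which suffices by Lemma~\ref{const}.
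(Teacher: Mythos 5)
Your overall strategy mirrors the paper's: part (ii) is proved by projecting along (a perturbation of) the end-to-end direction, and part (i) by lifting the crossingless closure arc to $3$-space and arguing the resulting closed curve is independent of the projection direction (the paper phrases this via equivalence of the associated theta curves rather than via Turaev's bijection between knot-type knotoids and knots, but the geometric content is the same). However, both halves of your argument contain a gap at a load-bearing step.

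In (i), you write that $l\cup\tilde c$ ``is exactly the closure $\hat l$ of $l$ through the point $\infty\in S^3$'' and that ``the knot $\hat l$ is manifestly independent of $\vec\xi$.'' There is no single well-defined closure of an open curve through $\infty$: an arc from $p$ to $q$ through $\infty$ escapes to infinity in some direction and returns from another, and different choices of these directions generally yield non-isotopic closures in the complement of $l$ --- this direction-dependence is precisely why the knotoid spectrum of an open curve is a spectrum rather than a single knotoid. What your lift actually shows is that the over- and under-closures in the direction $\vec\xi$ agree (because $c$ is crossingless), i.e.\ $\hat l_{+\vec\xi}=\hat l_{-\vec\xi}$; it does not show $\hat l_{\vec\xi_1}=\hat l_{\vec\xi_2}$ for two different directions each producing a knot-type knotoid. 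You correctly flag this as ``the delicate point'' but then treat it as a routine verification, when it is the entire content of part (i). (The paper's own justification --- that the two theta curves ``are equivalent'' --- is terse at exactly the same spot, but your write-up replaces the needed argument with an assertion of something false as stated.)

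In (ii), the step ``the image of $T$ is a small planar region disjoint from $l_{\vec\xi}$ except for the two strands emanating from $\bar p$ and $\bar q$'' does not follow: disjointness of the tube $T$ from $l$ in $\mathbb{R}^3$ does not make their projections disjoint, since strands of $l$ far from $T$ can project across the corridor. For a projection nearly parallel to $\sigma$, the preimage of the projected corridor is a thin cylinder around the whole \emph{line} through $p$ and $q$, not just around the segment $\sigma$; so you need the additional hypothesis that this line meets $l$ only at $p$ and $q$ (no trisecant through both endpoints). This is exactly the genericity assumption the paper's proof makes explicitly (``assume that $x_0,x_0'$ are not part of a higher order secant''). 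Your closing remark about discarding a measure-zero set of directions addresses genericity of $\vec\xi$ but not this trisecant obstruction, which concerns the curve $l$ itself. The fix is to either impose the paper's hypothesis or argue that one can perturb the basepoint/direction to avoid such trisecants before running the corridor argument.
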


\begin{proof}

(i) Let $l_{\vec{\xi}_1}$ be a projection of $l$ that gives the knot-type knotoid $\kappa$, and let $\theta_1$ denote the corresponding $\theta$ curve. Suppose that there is another projection $l_{\vec{\xi}_2}$ that gives another knot-type knotoid, $\kappa_2$, and let $\theta_2$ denote its theta curve. Adjust the over/under arcs of $\theta_1$ (resp. $\theta_2$), so that the projection of the over/under arcs of $\theta_1$ (resp. $\theta_2$) lie in the same region as the projection of the endpoints of $l$ with respect to $\vec{\xi}_1\in S^2$ (resp. $\vec{\xi}_2\in S^2$) . Then $\theta_1, \theta_2$ are equivalent, so $\kappa_1,\kappa_2$ are equivalent.

\noindent (ii) Let $l\in X, l\in\Omega_P$ and suppose that $l\not\in SK$. Let $\vec{\xi}\in S^2$  be the unit vector in the direction of the vector that connects the endpoints, say $x_0,x_0'$, of $l$, and assume that $x_0,x_0'$ are not part of a higher order secant. The projection with respect to $\vec{\xi}$ gives a diagram, where the two endpoints of $l$ coincide and lie in the same region of the diagram. By Lemma \ref{const} there is another vector $\vec{\xi}'\in S^2$ in an $\epsilon$-neighborhood of $\vec{\xi}\in S^2$ for $\epsilon>0$ and small enough, such that both endpoints project in the same region of the diagram and do not coincide (since $\vec{\xi}'\neq\vec{\xi}$), resulting in a knot-type knotoid.  

If $l\in SK$, the end-to-end vector intersects $l$ at a point $x\in l$. Then in all projection directions, the end-to-end vector will cross the rest of the diagram at the projection of $x$. Thus, all projections will give a knotoid of diagrammatic height at least 1, so $kspec(l)$ may not have a knot-type knotoid. 

\end{proof}

\begin{theorem}\label{injective}
    Let $S$ denote the set of knotoids and let $P^0(S)$ denote the powerset of $S$, excluding the trivial (empty) set. Let $\phi:X\rightarrow P^0(S)$ denote a map from the set of simple open curves in 3-space to the powerset of knotoids. Then $\phi$ is not surjective, nor injective.
\end{theorem}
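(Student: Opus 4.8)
The plan is to prove the two failures separately, since each requires exhibiting a different kind of counterexample. For \textbf{non-surjectivity}, I would argue that the map $\phi$ cannot hit arbitrary nonempty subsets of $S$ because the knotoid spectrum of any open curve is highly constrained. First, by Lemma \ref{knottypespec}(i), any $kspec(l)$ contains \emph{at most one} knot-type knotoid. Therefore any element of $P^0(S)$ that contains two distinct knot-type knotoids lies outside the image of $\phi$. Concretely, I would take the set $\{\kappa_1,\kappa_2\}$ where $\kappa_1,\kappa_2$ are two distinct knot-type knotoids (e.g. the unknot-type and trefoil-type knotoids); this singleton-pair is a legitimate element of $P^0(S)$ but can never arise as a full knotoid spectrum, establishing that $\phi$ is not surjective.

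For \textbf{non-injectivity}, the goal is to produce two genuinely different open curves $l_1 \neq l_2$ in $X$ with $\phi(l_1) = \phi(l_2)$. The cleanest route is to use the fact that the knotoid spectrum is a \emph{set} of knotoid types and is invariant under ambient isotopy of the curve (and under reparametrization), whereas $X$ is the space of actual embedded curves. I would first observe that any rigid motion (rotation/translation) of $\mathbb{R}^3$ applied to $l$ permutes projection directions on $S^2$ but leaves the resulting set of knotoid \emph{types} unchanged, so $l$ and any rotated copy $l'$ have identical spectra while being distinct curves in $X$. Even more strongly, small geometric perturbations keep the spectrum constant: by Lemma \ref{const} the knotoid spectrum is locally constant in $X$, so an entire open neighborhood of any $l$ maps to the single set $\phi(l)$. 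Since such a neighborhood contains uncountably many distinct curves, $\phi$ collapses them all and is therefore not injective.

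The main subtlety — and the step I would be most careful about — is making sure the two halves use legitimate targets and legitimate distinct preimages. For non-surjectivity I must confirm that the two-knot-type set I name is actually realizable as \emph{some} nonempty subset in the codomain $P^0(S)$ (it trivially is, as any finite set of knotoid types), and that Lemma \ref{knottypespec}(i) genuinely forbids it from being a spectrum; the quantifier here is that \emph{no} curve has more than one knot-type knotoid, so the forbidden target is never hit. For non-injectivity, the local-constancy argument via Lemma \ref{const} is the strongest and most self-contained, since it immediately yields uncountably many curves with equal image without needing to track how isotopy acts on the full spectrum; I would lead with it. The only thing to verify is that a genuine open ball in the Hausdorff metric around a generic curve stays inside $X$ (remains simple and open), which follows because simplicity and the open-endpoint condition are open conditions in $\Omega$.

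\begin{remark}
An alternative, more structural non-surjectivity argument would note that the spectrum must also satisfy coherence constraints inherited from the underlying embedding — for instance, the knotoids in a spectrum are not arbitrary but are related through the essential-secant structure of the curve — so that many abstract subsets of $S$ fail to be ``geometrically coherent.'' The two-knot-type obstruction is the simplest such constraint and suffices for the theorem.
\end{remark}
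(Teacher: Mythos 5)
Your proposal is correct and follows essentially the same route as the paper: non-injectivity from the local constancy of the knotoid spectrum (Lemma \ref{const}), and non-surjectivity by exhibiting a set containing two distinct knot-type knotoids (the paper uses exactly $\{\text{trivial knotoid},\ \text{trefoil knot-type knotoid}\}$), which Lemma \ref{knottypespec}(i) forbids from being any curve's spectrum. The extra care you take about open balls staying in $X$ and the rigid-motion alternative are fine but not needed beyond what the paper already invokes.
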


\begin{proof}

     By Lemma \ref{const}, there exist $l,l'\in X, l\neq l'$, such that $kspec(l)=kspec(l')$. 
     
     \noindent The set of knotoids that consists of only the trivial knotoid and the trefoil knot-type knotoid cannot be the knotoid spectrum of any open curve in 3-space, since by Lemma \ref{knottypespec},the knot-type knotoid in the spectrum of an open curve is unique.

\end{proof}

\subsection{Open curve neighborhood of a knot}

In this section we introduce the open curve neighborhood of a knot which gives rise to the knotoid spectrum of a knot. We explore how the pure knotoids in the knot spectrum of a knot can distinguish non-equivalent knots.

\begin{definition}\label{openneighborhood}(Open curve neightborhood of a knot)
Let $K$ denote a knot, seen as a simple piecewise linear closed curve in 3-space and let $x\in K$ be a vertex of $K$. Let us denote by $K_{x}$ the open curve obtained by deleting the edge incoming to $x$ (assuming an ordering of the vertices of $K$).  The based $h_x-$neighborhood of the knot embedding at $x$, in the set of simple open curves in 3-space defined as $N_{h_x,x}(K)=\lbrace{l\in X|D(l,K_{x})<h\rbrace}$, where $h_x>0$ is small enough such that $l$ has the same knotoid spectrum as $K_{x}$ and for which there exists an ambient isotopy from $K_{x}$ to $l$, not passing through the end-to-end closure arc at any time.  The $h-$neighborhood of the knot embedding $K$, $N_h(K)$, is the union of all the based neighborhoods of the knot embedding, ie. $N_h(K)=\cup\lbrace N_{h_x,x}|x\in K\rbrace$, where $0<h<h_x$ for all $x\in K$. Notice that $N_{h_x,x}(K)\subset N_h(K)$.
\end{definition}

\begin{figure}[ht!]
    \centering
\includegraphics[scale=0.3]{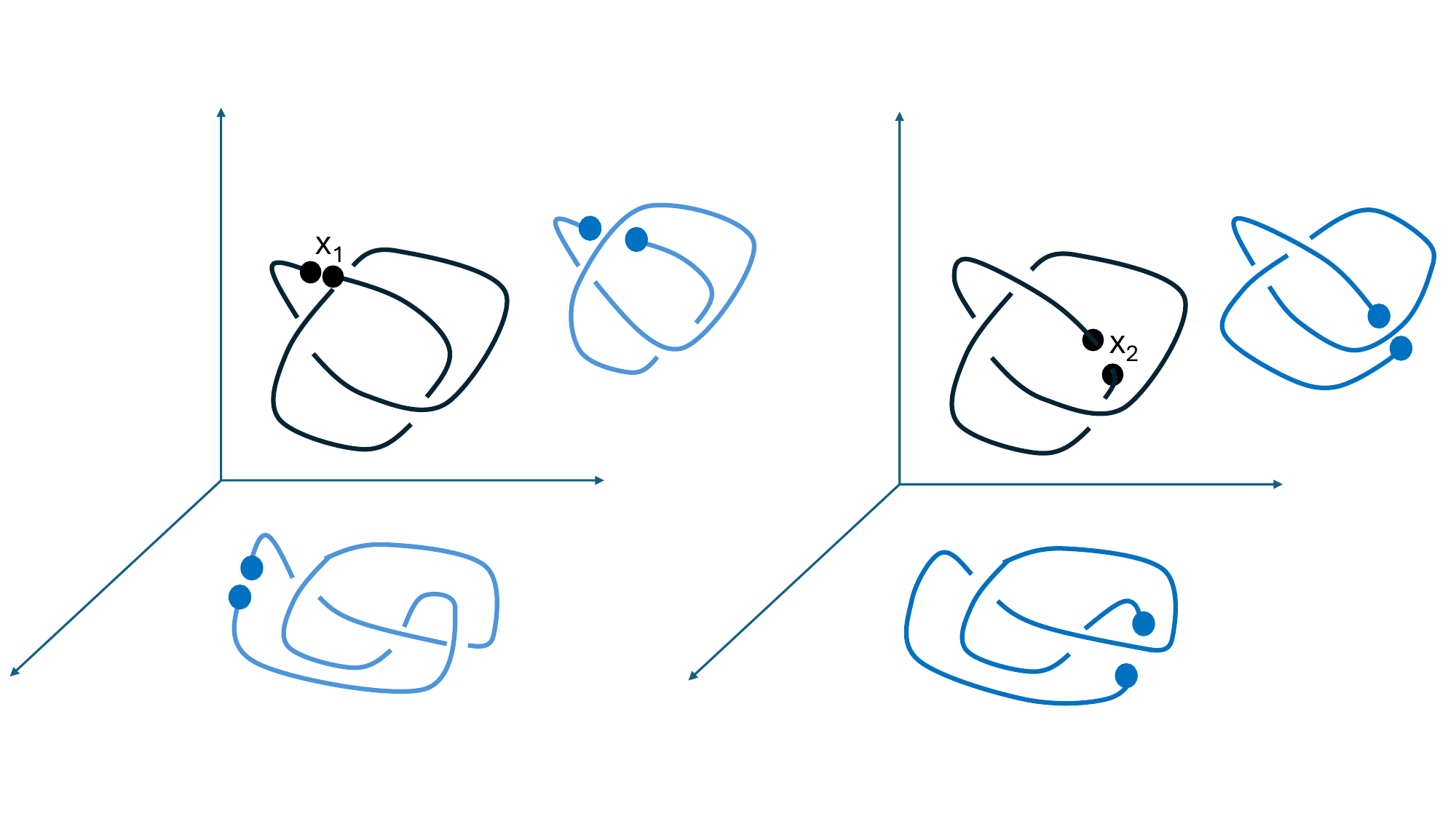} 
    \caption{Examples of open curves in the neighborhood of an embedding of the trefoil, $K$, based at different points, $x_1, x_2$ along the knot. Each open curve (shown in black), we denote $l_{x_1}, l_{x_2}$, gives rise to a knotoid spectrum (examples of knotoid diagrams shown in blue). The knotoids shown belong to the based neighborhoods of $K$ at $x_1$ and $x_2$, $N_{h,x_1}$ and $N_{h,x_2}$, respectively. The union of the knotoids from both belong to the neighborhood of $K$, $N_h(K)$.}
    \label{neighborhood}
\end{figure}

\begin{definition}[open curve neighborhood of a knot-type]
    Let $\kappa$ denote a knot type and let $X_{\kappa}=\cup_{K\sim\kappa} N_h(K)$, be the union of the neighborhoods of all embeddings of $\kappa$, which we call the open neighborhood of knot type $\kappa$. Then $X_{\kappa}\subset X$. Let us call $XK=\cup_{\kappa}X_{\kappa}$ the space of open knots and let its complement, $XU=X\setminus XK$, be called the space of transient knotting.
\end{definition}

\begin{definition}\label{knotspecknot}[Knotoid spectrum of a knot]
Let $K$ denote an embedding of a knot $\kappa$ and $x$ a point on $K$. We will define the knotoid spectrum of $K$ to be the union of the knotoid spectra of all the open curves in $N_{h}(K)$. We denote $kspec(K)=\cup_{l\in N_h(K)}\lbrace kspec(l)\rbrace$.  We will define the pure knotoid spectrum of $K$ to be all the knotoids in $kspec(K)$ except the knot-type knotoid $\kappa$, we denote $pkspec(K)$. We will denote the subset of knotoids of height $m\in N$ of the knotoid spectrum of an embedding of a knot, $H_m(kspec(K))$.  We define the geometric spectrum of a knot to be the spectrum with the associated geometric probabilities of each knotoid in the spectrum, we denote $gspec(K)=\cup_{l\in N_h(K)}\lbrace gspec(l)\rbrace$.   Similarly, we can define the virtual spectrum of the knot as the union of the virtual knotoid spectra of $K$, we denote $vspec(K)=\cup_{l\in N_h(K)}\lbrace vspec(l)\rbrace$.
\end{definition}

\begin{remark}
    We can also define the knot spectrum of a knot via closures, and the strongly invertible knot spectrum of a knot. 
\end{remark}

Notice that we can generalize the neighborhood of a knot to include collections of open curves in 3-space and associated linkoid spectra. The corresponding definitions are given in the Appendix. This will not be the focus of this manuscript.

The following Corollaries follow from Definition \ref{knotspecknot} and Lemma \ref{knottypespec}:

\begin{corollary}\label{knottype}
    Let $K$ denote an embedding of a knot of type $\kappa$. For all $l\in N_h(K)$, open curves in the neighborhood of $K$, the knotoid spectrum of $l$ contains the knot-type knotoid of type $\kappa$ and is the only knot-type knotoid in the spectrum. 
\end{corollary}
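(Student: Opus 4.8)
The plan is to reduce Corollary \ref{knottype} entirely to the two lemmas already at our disposal, namely Lemma \ref{const} and Lemma \ref{knottypespec}, together with the defining properties of the neighborhood $N_h(K)$ given in Definition \ref{openneighborhood}. First I would fix an arbitrary $l \in N_h(K)$. By the definition of the $h$-neighborhood, $l$ lies in some based neighborhood $N_{h_x,x}(K)$ for a vertex $x \in K$, and by the choice of $h_x$ there is an ambient isotopy from the based open curve $K_x$ to $l$ that never passes the curve through its end-to-end closure arc. The key observation is that such an isotopy keeps the endpoints from ever crossing the closure segment, so $K_x$ and $l$ share the same knotoid spectrum; this is precisely the local constancy asserted in Lemma \ref{const} propagated along the isotopy, and it is also written directly into the condition defining $h_x$. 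Hence $kspec(l) = kspec(K_x)$.

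Next I would show that $K_x$ has a knot-type knotoid in its spectrum, of type $\kappa$. The curve $K_x$ is obtained from the embedded closed curve $K$ of type $\kappa$ by deleting a single edge; the two resulting endpoints are the endpoints of that deleted edge, and they are joined precisely by the straight closure segment recovering $K$. Projecting $K_x$ along the direction $\vec{\xi}$ of the deleted edge makes the two endpoints coincide and lie in the same region, so by the argument of Lemma \ref{knottypespec}(ii) a nearby direction $\vec{\xi}'$ yields a knot-type knotoid in $kspec(K_x)$. Closing this knot-type knotoid along the deleted edge reconstructs $K$, so the recovered knot type is exactly $\kappa$. Therefore $kspec(K_x)$, and hence $kspec(l)$, contains the knot-type knotoid of type $\kappa$.

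Finally, uniqueness is immediate from Lemma \ref{knottypespec}(i): once a projection of $l$ yields a knot-type knotoid, that knot-type knotoid is the \emph{only} knot-type knotoid in $kspec(l)$. Combining the two parts, every $l \in N_h(K)$ has exactly one knot-type knotoid in its spectrum, and it is of type $\kappa$, which is the claim.

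I do not expect a serious obstacle here, since the statement is designed to follow formally from the preceding lemmas; the one point requiring care is verifying that $K_x \notin SK$ so that Lemma \ref{knottypespec}(ii) genuinely applies, and that the isotopy clause in Definition \ref{openneighborhood} is exactly what guarantees $kspec(l)=kspec(K_x)$ rather than merely $kspec(l) \supseteq kspec(K_x)$. The deleted edge realizes the closure segment as disjoint from the rest of $K_x$ (as $K$ is embedded), so $K_x \notin SK$ and the knot-type conclusion is safe; this is the detail I would state explicitly to make the reduction airtight.
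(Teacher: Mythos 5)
Your proposal is correct and follows essentially the same route as the paper, which simply asserts that the corollary follows from Definition \ref{knotspecknot} and Lemma \ref{knottypespec}; you have merely filled in the details the paper leaves implicit. Your explicit check that $K_x \notin SK$ (because the deleted edge is part of the embedded knot $K$ and hence disjoint from the rest of $K_x$) is a worthwhile point of care that the paper does not spell out.
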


\begin{corollary}
     Let $K_1,K_2$ denote the embeddings of two knots in 3-space. $K_1$ and $K_2$ are equivalent knots if and only if the knot-type knotoids  in their spectra are equivalent.
\end{corollary}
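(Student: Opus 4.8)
The plan is to prove both implications by reducing to the fact that the assignment from a knot type to its knot-type knotoid, realized through the over/under closure, is a bijection. First I would fix notation: let $\kappa_i$ be the knot type of the embedding $K_i$ for $i=1,2$. By Corollary \ref{knottype}, every open curve $l\in N_h(K_i)$ carries in its spectrum exactly one knot-type knotoid, and that knotoid has type $\kappa_i$; in particular the knot-type knotoid appearing in $kspec(K_i)$ is unique and depends only on the knot type $\kappa_i$, not on the chosen embedding or base point. Write $\tau(\kappa_i)$ for this knot-type knotoid, so that $\tau$ is a well-defined assignment from knot types to equivalence classes of knot-type knotoids.

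For the forward direction, suppose $K_1$ and $K_2$ are equivalent knots, so $\kappa_1=\kappa_2$. Then $\tau(\kappa_1)=\tau(\kappa_2)$ by the well-definedness noted above, i.e. the knot-type knotoids in their spectra are equivalent. This direction is immediate once $\tau$ is seen to be well defined.

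For the converse, I would show that $\tau$ is injective, which is the substantive step. The tool is the over/under closure: given any knot-type knotoid, close it by an end-to-end arc that does not meet the rest of the diagram (such an arc exists by the definition of knot-type), obtaining a genuine knot. Because the closure arc is disjoint from the diagram, the over-closure and the under-closure coincide, and the resulting knot is invariant under knotoid equivalence; hence the closure descends to a well-defined map from knot-type knotoids to knot types. Composing $\tau$ with this closure returns the original knot type, since $\tau(\kappa)$ is by construction obtained by opening an embedding of $\kappa$ along a crossing-free closure arc, exactly as in the proof of Lemma \ref{knottypespec}(ii). Thus the closure map is a left inverse of $\tau$, so $\tau$ is injective. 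Consequently, if $\tau(\kappa_1)$ and $\tau(\kappa_2)$ are equivalent, then $\kappa_1=\kappa_2$, and $K_1$ and $K_2$ are equivalent knots.

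The main obstacle is the injectivity of $\tau$, i.e. ensuring that the passage from a knot to its knot-type knotoid loses no information. This amounts to verifying that the over/under closure is well defined on equivalence classes of knot-type knotoids and recovers the underlying knot, which is the standard Turaev correspondence between knot-type knotoids in $S^2$ and knots in $S^3$. I would take particular care to confirm that the closure arc being disjoint from the diagram forces the over- and under-closures to agree, so that no over/under ambiguity enters the definition of the recovered knot type.
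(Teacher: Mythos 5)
Your proposal is correct and follows essentially the same route as the paper, which states this corollary as an immediate consequence of Definition \ref{knotspecknot} and Lemma \ref{knottypespec} without a written proof: the forward direction is the well-definedness and uniqueness of the knot-type knotoid in the spectrum (Corollary \ref{knottype}), and the converse is the standard Turaev correspondence recovering the knot type from a knot-type knotoid via its crossing-free closure. Your explicit verification that the over- and under-closures coincide for a knot-type knotoid is a reasonable filling-in of the detail the paper leaves implicit.
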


\begin{lemma}
The following are true for the space $X$ and its knot subspaces.

\noindent (i) The space of open curve neighborhoods of embeddings of a given knot type $\kappa$, $X_{\kappa}$, is connected and dense.
    
\noindent (ii) Let $\kappa_1,\kappa_2$ be two non-equivalent knots, then $X_{\kappa_1}\cap X_{\kappa_2}=\emptyset$. 
\end{lemma}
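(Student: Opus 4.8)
The plan is to dispatch part (ii) immediately from the earlier results and to concentrate on the connectedness and density claims of part (i). For part (ii), I would argue by contradiction: if $l\in X_{\kappa_1}\cap X_{\kappa_2}$, then $l\in N_h(K_1)$ for some embedding $K_1$ of $\kappa_1$ and $l\in N_h(K_2)$ for some embedding $K_2$ of $\kappa_2$. By Corollary~\ref{knottype}, membership in $N_h(K_1)$ forces $kspec(l)$ to contain the knot-type knotoid of type $\kappa_1$ as its unique knot-type knotoid, and membership in $N_h(K_2)$ forces the unique knot-type knotoid to be of type $\kappa_2$. Uniqueness (Lemma~\ref{knottypespec}(i)) then gives $\kappa_1=\kappa_2$, contradicting non-equivalence; hence $X_{\kappa_1}\cap X_{\kappa_2}=\emptyset$.

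For connectedness in part (i), I would join any $l\in X_\kappa$ to an opened embedding of $\kappa$ and then join any two opened embeddings. For the first step, the definition of $N_{h_x,x}(K)$ supplies an ambient isotopy from $K_x$ to $l$ that never crosses the end-to-end closure arc; reading this isotopy as a path in $X$, each intermediate open curve closes up along the (still disjoint) closure arc to an embedding of $\kappa$, so the entire path lies in $X_\kappa$. For the second step, two embeddings $K,K'$ of $\kappa$ are ambient isotopic, and the isotopy gives a continuous path of embeddings of $\kappa$; opening each at a continuously varying vertex produces a path of opened embeddings, all in $X_\kappa$, and a final gap-sliding deformation that moves the deleted edge around the knot (keeping the curve an opened embedding of $\kappa$) reaches the prescribed base point $x'$. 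Concatenating these paths shows $X_\kappa$ is path-connected.

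For density, given $l\in X$ and $\epsilon>0$ I would build $l'\in X_\kappa$ with $D(l,l')<\epsilon$ inside the solid tube $T_\epsilon=\{x:d(x,l)<\epsilon\}$. The point is that $T_\epsilon$ is topologically a ball and, since $l$ has positive length while $\epsilon$ may be small, it is long and thin enough to carry an embedded arc whose end-to-end closure realizes any prescribed knot type. Routing $l'$ so that it sweeps back and forth through $T_\epsilon$ simultaneously covers $l$ (so $l$ lies within $\epsilon$ of $l'$) and leaves room to tie the fixed, finite-crossing knot $\kappa$; placing the endpoints near $\partial T_\epsilon$ so that a straight closure arc meets $l'$ cleanly makes $l'$ an opened embedding $K'_x$ of an embedding $K'$ of $\kappa$, whence $l'\in N_{h_x,x}(K')\subset X_\kappa$ with $D(l,l')<\epsilon$.

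The main obstacle is this last construction: showing quantitatively that an $\epsilon$-thin tube around a curve of length $L$ admits an embedded arc of any fixed knot type while still $\epsilon$-covering the core. I would control it with a crossing-number-versus-length estimate guaranteeing that the required back-and-forth fits once $L/\epsilon$ is large (automatic as $\epsilon\to 0$), and I would take care that the closure arc remains disjoint from $l'$, so that $l'$ genuinely lies in a based neighborhood rather than merely having $\kappa$ in its spectrum. By contrast, connectedness and part (ii) should be routine consequences of the earlier lemmas.
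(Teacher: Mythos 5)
Your proposal is correct in outline, and part (ii) coincides exactly with the paper's argument (both reduce to the uniqueness of the knot-type knotoid in a spectrum, Lemma~\ref{knottypespec}(i)). For part (i), however, you take a genuinely different route. The paper proves connectedness by pulling an ambient isotopy $g_t$ between two embeddings $K_0,K_1$ of $\kappa$ and showing, via uniform continuity in $t$, that consecutive based neighborhoods $N(h,K_{t_1})$ and $N(h,K_{t_2})$ overlap for $|t_1-t_2|$ small, so that $X_\kappa$ is a chain of overlapping (connected) sets; you instead build explicit paths in $X$ --- first from an arbitrary $l\in N_{h_x,x}(K)$ back to the opened embedding $K_x$ using the isotopy built into Definition~\ref{openneighborhood}, then between opened embeddings via the ambient isotopy plus a gap-sliding move to change the base vertex. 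The two arguments buy the same thing (yours gives path-connectedness a bit more explicitly; the paper's avoids having to check that intermediate curves of your isotopy really do lie in some based neighborhood, which you should verify by observing that each intermediate curve is itself an opened embedding of $\kappa$ and hence at distance $0$ from one). The more significant divergence is density: the paper disposes of it in one clause (``dense as a union of open subsets of $X$''), which as stated only yields openness, whereas your tube construction --- routing a curve through the $\epsilon$-tube around $l$ that both $\epsilon$-covers $l$ and ties a small copy of $\kappa$ with a clean closure arc --- is an actual argument for density and does work the paper's proof does not. The one point to nail down there is the one you already flag: the perturbed curve must be an opened embedding $K'_{x}$ (not merely a curve with $\kappa$ somewhere in its spectrum) so that it lies in $N_{h_x,x}(K')$, and the closure arc must stay disjoint from it; since you arrange the endpoints so the straight closure chord is the deleted edge, this is consistent with the definitions in the paper.
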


\begin{proof}
    (i) $X_{\kappa}$ is dense as a union of open subsets of $X$. Any $K_0,K_1\in X_{\kappa}$, embeddings of the knot type $\kappa$ are related by ambient isotopy. Let $g:\mathbb{R}^3\times[0,1]\rightarrow \mathbb{R}^3$ be the isotopy from $K_0$ to $K_1$. Then $g_t\circ K_0=K_t$, where $K_t, t\in[0,1]$, is an embedding of the knot $\kappa$. Let $x\in K$ and consider the neighborhoods $N_t=N(h_{K_t},(K_t)_{g_t(x)})$ of $K_t$ based at $g_t(x)$, respectively. Let $0<h<h_{K_t}$ for all $t\in[0,1]$. By continuity of $g_t$ with respect to $t$, there is $\delta>0$ such that for all $t_1,t_2\in I$ such that $|t_1-t_2|<\delta$, then $D(g_{t_1}\circ K_0 ,g_{t_2}\circ K_0 )=D(K_{t_1},K_{t_2})<h/2$. Let $l\in N(h,K_{t_1})$, such that $D(l,K_{t_1})<h/2$. Since $l\in N(h,K_{t_1})$ then $l\simeq K_{t_1}$ via an isotopy, $h_t$, that does not cross the end-to-end closure arc. Then $F_t=h_{1-2t}, t\in[0,1/2]$ and $F_t=g_{(2-2t)t_1+(2t-1)t_2}, t\in[1/2,1]$ is an isotopy from $l$ to $K_{t_2}$, that does not cross the end-to-end closure arc of $F_t(l)$ for any $t$. Also $D(l,K_{t_2})\leq D(l,K_{t_1})+D(K_{t_1},K_{t_2})<h$.  It follows that $l\in N(h,K_{t_2})$, thus $l\in N(h,K_{t_1})\cap N(h,K_{t_2})$. 

\noindent  (ii) Let $\hat{\kappa_1}, \hat{\kappa_2}$ denote the knot type knotoids of knot type $\kappa_1, \kappa_2$, respectively. Suppose $l\in X_{\kappa_1}\cap X_{\kappa_2}$, then $\hat{\kappa_1},\hat{\kappa_2}\in kspec(l)$, contradiction by Lemma \ref{knottypespec}.

\end{proof}

\begin{proposition}\label{diag_height}
Let $K$ denote a knot embedding and let $x_0\in K$. Suppose that $K$ has $k_n$ $n$-secants, $n>2$, containing $x_0$. There exists $h$ small enough, such that for any $l\in N_{h,x_0}(K)$, open curve $l$ on the open neighborhood of $K$, based at $x_0$, its knotoid spectrum $kspec(l)$ consists of one knot type knotoid, knotoids of height less than or equal to 1, and exactly $k_n$  knotoids of height less than or equal to $n-1$, where $n>2$. If in addition $K$ is a generic polygon, $K\in\Omega_P$, then its knotoid spectrum consists of knotoids of only up to height 3.
\end{proposition}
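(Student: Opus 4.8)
The plan is to translate the combinatorics of secants through the basepoint $x_0$ into statements about the diagrammatic height of knotoids appearing in $kspec(l)$, using the fact (Lemma~\ref{const}) that the spectrum is locally constant, so for $h$ small enough every $l\in N_{h,x_0}(K)$ has the same spectrum as the open arc $K_{x_0}$ itself. Thus I would first reduce the problem to analyzing projections of $K_{x_0}$, the knot $K$ cut open at $x_0$. The key observation is that the height of a knotoid obtained from a projection $\vec{\xi}$ equals the minimal number of times an end-to-end closure arc must cross the diagram, and this closure arc runs from the projected image of one endpoint of $K_{x_0}$ (both near $x_0$) out to a faraway region. An end-to-end arc is forced to cross the diagram exactly at those places where the line segment (or closure path) is topologically obstructed, and these obstructions are governed by the secants of $K$ passing through $x_0$: an $n$-secant through $x_0$ forces the closure arc to pass through up to $n-1$ strands.

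Next I would carry out the height bookkeeping. For generic projection directions $\vec{\xi}$ (all but a measure-zero set, by the knotoid spectrum definition), the two endpoints of $K_{x_0}$ project to nearby points, and by the argument in the proof of Lemma~\ref{knottypespec}(ii) one can place them in the same region to obtain either the knot-type knotoid (when the closure arc avoids the diagram) or a height-one knotoid (the generic case where it crosses once). The nontrivial heights arise precisely when the projection direction is aligned, up to the local structure at $x_0$, with a higher-order secant through $x_0$: if $a_1 a_2 \cdots a_n$ is an $n$-secant containing $x_0$, then projecting along its direction stacks $n$ points of $K$ over a single image point, and the closure arc emanating from $x_0$ must cross the $n-1$ other strands, producing a knotoid of diagrammatic height at most $n-1$. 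I would argue there is exactly one such contribution per $n$-secant through $x_0$, giving the claimed count of $k_n$ knotoids of height $\le n-1$, while all remaining directions yield height $\le 1$.

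For the final sentence, I would invoke the genericity hypothesis $K\in\Omega_P$: by the Denne genericity conditions quoted in the excerpt, there are no quintisecants or higher-order secants, so the only higher-order secants through any point are trisecants and quadrisecants, i.e.\ $n\le 4$. Hence the maximal attainable height is $n-1=3$, and the spectrum consists of knotoids of height at most $3$. I would phrase this as a direct corollary of the secant-to-height correspondence established above together with condition (ii) of the genericity definition.

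The main obstacle I anticipate is making the secant-to-height correspondence precise and tight, in particular justifying that each $n$-secant through $x_0$ contributes \emph{exactly} one knotoid region in the spectrum (rather than several, or none) and that the height is controlled by $n-1$ rather than merely bounded crudely. This requires understanding how the spherical regions in the knotoid spectrum (the great-circle decomposition of $S^2$ from the proof of Lemma~\ref{const}) are organized around the directions associated to secants through $x_0$, and carefully tracking which endpoint is deleted and how the closure arc is routed. A secondary subtlety is that ``height'' is a minimum over all diagrams, so I must argue that the diagrammatic height from these special projections is not reducible by Reidemeister moves below the stated bound in the generic case, while simultaneously only claiming an upper bound ($\le n-1$) which is safer; the statement as phrased only asserts upper bounds on height, which eases this difficulty considerably.
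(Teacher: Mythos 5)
Your proposal follows essentially the same route as the paper's proof: project along the direction of an $n$-secant through $x_0$ so that $n-1$ strands stack over the image of the endpoints, bound the diagrammatic height of the resulting knotoid by the number of strands the closure arc must cross, and invoke the absence of quintisecants for generic polygons to cap the height at $3$. The one obstacle you flag --- justifying that each $n$-secant contributes \emph{exactly} one knotoid to the spectrum rather than merely at most one --- is likewise left implicit in the paper's own argument, so your treatment is, if anything, slightly more explicit about where the care is needed.
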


\begin{proof}

By taking a projection with respect tona unit vector, say $\vec{\xi}\in S^2$, in the direction of an $n$-secant that goes through $x$, $K$ will have a non-generic projection where $n$ points coincide. The same projection direction of any $l\in N_{h,x_0}(K)$ will lead to a non generic projection with $n-1$ points projecting in one. Then, for small enough $\epsilon$, any vector in an $\epsilon$-neighborhood of $\vec{\xi}$, say $\vec{\xi}_1$, will give rise to a knotoid diagram. The closure arc will give rise to a corresponding virtual knot diagram with $n-1$ virtual crossings, thus, a virtual knot of less than or equal to $n-1$ virtual crossings or a knotoid of height less than or equal to $n-1$. For a generic polygon, there will be only up to quadrisecants \cite{Denne2006}, thus giving only up to height 3 knotoids.

\end{proof}

\begin{definition}[Gordian distance]
    The Gordian distance of two knots is the minimum number of crossing changes needed to go from one to the other, we denote $d_G(\kappa_1,\kappa_2)$ for two knots $\kappa_1,\kappa_2$ \cite{Murakami1985}. If one of the knots is the unknot, then this is the unknotting number of the non-trivial knot.
\end{definition}

\begin{proposition}\label{diagrammatic}
   
    Let $\kappa$ be a non-trivial knot of unknotting number $u(\kappa)$. Then for any generic embedding $K\in\Omega_P$ of $\kappa$ of $n$ edges, $kspec(K)$ contains at least $2u(\kappa)^2$  and at most $n/12(n-3)(n-4)(n-5)$ knotoids of diagrammatic height 3 in its spectrum, at least one is alternating. 
\end{proposition}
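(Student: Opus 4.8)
The plan is to reduce the count of height-$3$ knotoids in $kspec(K)$ to a count of quadrisecants of the embedding $K$, and then to bound the number of quadrisecants from below and above. The bridge between the two is the proof of Proposition \ref{diag_height}: if a basepoint $x_0$ lies on an $n$-secant of $K$, projecting along that secant direction forces $n$ collinear points to coincide, and after deleting the edge at $x_0$ and perturbing the projection one obtains a knotoid whose end-to-end closure arc carries $n-1$ crossings. Specializing to $n=4$, every quadrisecant of $K$ through the chosen basepoint contributes a knotoid of height $\le 3$ to the spectrum, and because $K\in\Omega_P$ has no quintisecants (genericity condition (ii)) these are exactly the height-$3$ knotoids. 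Since $kspec(K)$ is the union over all basepoints, counting its height-$3$ knotoids amounts to counting quadrisecants of $K$, each based at one of its four secant points.

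For the lower bound I would invoke Pannwitz's theorem \cite{Pannwitz1933}, which guarantees that a generic polygonal representative of a nontrivial knot carries a number of quadrisecants bounded below in terms of its unknotting number, yielding at least $2u(\kappa)^2$ of them; by the correspondence above each produces a height-$3$ knotoid in $kspec(K)$. For the alternating claim I would use that every nontrivial knot admits an essential alternating quadrisecant \cite{Denne2006}: its alternating order $a_{i-1}a_{i+1}a_i a_{i+2}$ of secant points, carried through the projection of Proposition \ref{diag_height}, yields a height-$3$ knotoid whose three closure-arc crossings alternate, i.e. an alternating height-$3$ knotoid, and its essentiality (via Lemma \ref{strongly} and \cite{Detoffoli2013}) guarantees it is genuinely of height $3$ rather than reducible.

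For the upper bound I would argue combinatorially. A quadrisecant is a line meeting $K$ transversally in four points lying on four distinct edges, and the secant ordering together with non-degeneracy forces these edges to be suitably separated, so that the number of admissible edge-quadruples is at most $\frac{n}{24}(n-3)(n-4)(n-5)$. Any three pairwise skew edges lie on a unique doubly-ruled quadric whose second ruling is their family of common transversals; a fourth edge meets this quadric in at most two points, so four edges admit at most two transversals. Genericity condition (i) (no vertex of $K$ lies on such a doubly-ruled surface) ensures this bound of two is never exceeded and that each transversal is counted once. Multiplying the number of admissible quadruples by two gives at most $\frac{n}{12}(n-3)(n-4)(n-5)$ height-$3$ knotoids.

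The main obstacle I anticipate lies in the lower bound, and it is of two kinds. First, since $kspec(K)$ is a \emph{set} of knotoid types, I must ensure that the $\ge 2u(\kappa)^2$ quadrisecants produce that many \emph{distinct} height-$3$ knotoids (or else interpret the count with the geometric multiplicities of $gspec(K)$); I would attempt to separate the resulting types by the essential/inessential and alternating/non-alternating structure of their underlying quadrisecants. Second, I must verify that the projection in Proposition \ref{diag_height} realizes the height \emph{exactly} rather than merely as an upper bound — that an essential (alternating) quadrisecant cannot degenerate to a knotoid of height $1$ or $2$ — which is precisely where the essentiality results of \cite{Detoffoli2013} and Lemma \ref{strongly} do the real work. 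Showing that non-equivalent quadrisecant configurations do not collapse to a common knotoid type is the delicate heart of the argument.
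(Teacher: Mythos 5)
Your proposal follows essentially the same route as the paper: each quadrisecant through the basepoint yields, via the projection of Proposition \ref{diag_height}, a knotoid of diagrammatic height $3$, with the lower bound $2u(\kappa)^2$ and the upper bound $\tfrac{n}{12}(n-3)(n-4)(n-5)$ taken directly from the quadrisecant counts of \cite{Pannwitz1933,Kuperberg2003} and the alternating one from \cite{Denne2006}; the paper simply cites these counts where you reconstruct the doubly-ruled-quadric argument. The two worries you flag --- that distinct quadrisecants might collapse to the same knotoid type in the set $kspec(K)$, and that the height is realized exactly --- are not addressed in the paper's one-line proof either (the second is largely defused because the statement concerns \emph{diagrammatic} height of the produced diagrams rather than the height of the underlying knotoid), so your attempt is, if anything, more careful than the original.
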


\begin{proof}
    It follows by \cite{Pannwitz1933,Kuperberg2003}, since every such knot has at least $2u(\kappa)^2$  and at most $n/12(n-3)(n-4)(n-5)$ quadrisecants and a projection of any $l\in N_h(K)$ based at one of the points in these quadrisecants,  will give knotoids of diagrammatic height 3. By \cite{Denne2006}, at least one will be alternating (see Figure \ref{quadr}).
\end{proof}

\begin{figure}[ht!]
    \centering
\includegraphics[scale=0.3]{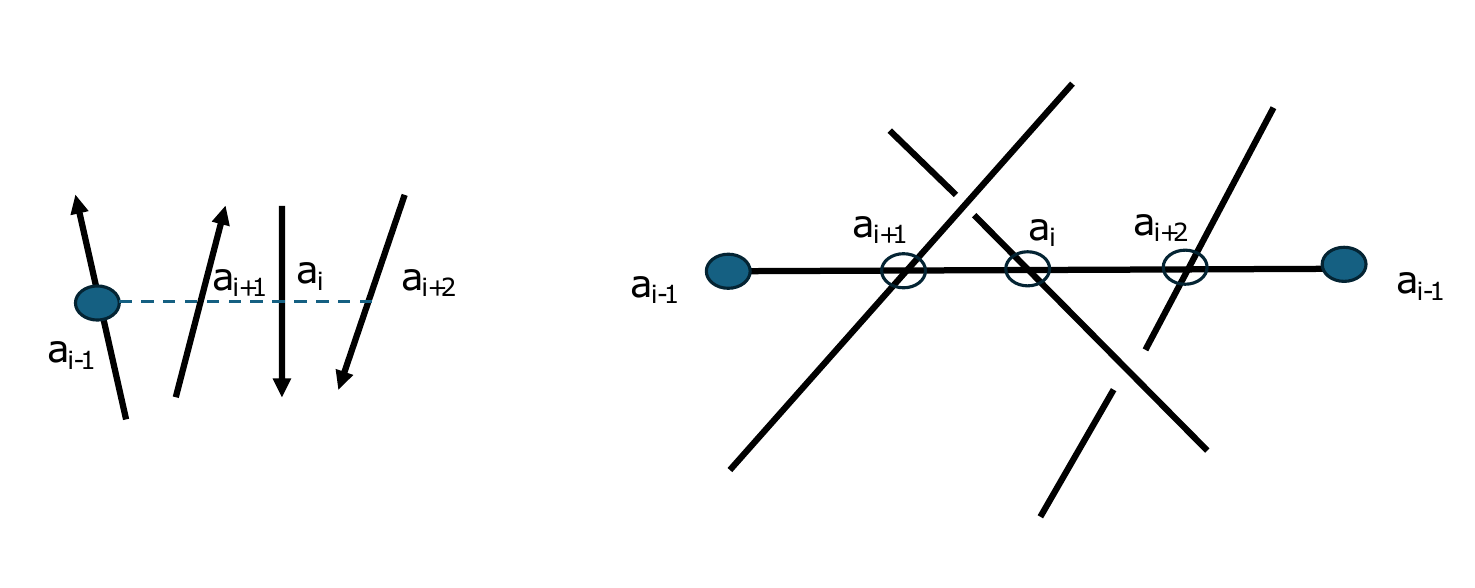} 
    \caption{(Left) An alternating quadtisecant that connects the points $a_{i-1}a_ia_{i+1}a_{i+2}$ on a piecewise linear knot embedding in 3-space (only 4 edges of the knot shown, with orientation). (Right) A part of a projection of an open curve in the based neighborhood of the corresponding knot based at $a_{i-1}$, in the direction of the quadrisecant of the knot. For simplicity, we denote by $a_{i-1}$ both endpoints of the edge incident to the point $a_{i-1}$. The virtual arc closure of the knotoid shown. The neighborhood of the knot based at $a_{i-1}$ gives rise to knotoids of diagrammatic height 3.}
    \label{quadr}
\end{figure}

\begin{theorem}\label{uncrossing}
   If $\kappa_1, \kappa_2$ are two knot types with $d_G(\kappa_1,\kappa_2)>n$, then for any two embeddings $K_1$ and $K_2$ of $\kappa_1,\kappa_2$, the knotoids of height $m$, $m\leq n$, in their spectra are all different. If $K_1,K_2\in\Omega_P$, and $d_G(\kappa_1,\kappa_2)>3$, then all knotoids in their spectra are different.
\end{theorem}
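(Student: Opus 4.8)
The plan is to argue by contradiction. Suppose that some knotoid $k$ of height $m$ with $m\le n$ lies in both $kspec(K_1)$ and $kspec(K_2)$. I will show that this forces $d_G(\kappa_1,\kappa_2)\le m\le n$, contradicting the hypothesis $d_G(\kappa_1,\kappa_2)>n$. Hence no height-$m$ knotoid with $m\le n$ can be shared, i.e. the height-$m$ knotoids in the two spectra are disjoint, which is the first assertion. The entire difficulty is therefore concentrated in the implication ``a shared height-$m$ knotoid $\Rightarrow d_G(\kappa_1,\kappa_2)\le m$''.

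To establish that implication I would first reinterpret the two knots as closures of the common knotoid. By Definition~\ref{openneighborhood}, the open curve $l_1\in N_h(K_1)$ realizing $k$ closes up to $\kappa_1$ by reinserting the deleted edge, and in a projection direction realizing $k$ this edge becomes the end-to-end closure arc. Following the secant description in the proof of Proposition~\ref{diag_height}, a height-$m$ knotoid arises from an $(m{+}1)$-secant, so the closure arc meets the diagram in $m$ points; equivalently, the virtual closure of $k$ has virtual crossing number $m$. The knot $\kappa_1$ is then exactly the classical resolution of these $m$ crossings dictated by the over/under data of the secant in $3$-space, and likewise $\kappa_2$ is a resolution of the same $m$ closure crossings of the same knotoid $k$, coming from some $l_2\in N_h(K_2)$. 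Since $\kappa_1$ and $\kappa_2$ are two over/under resolutions at a common set of $m$ crossings, passing from one to the other changes at most $m$ crossings, whence $d_G(\kappa_1,\kappa_2)\le m$, as required.

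The main obstacle is that $k$ enters the two spectra through two distinct open curves $l_1,l_2$, hence through two diagrams $D_1,D_2$ of $k$ that need not be minimal and need not be literally equal; a priori $\kappa_1$ and $\kappa_2$ are resolutions at two different sets of closure crossings. To make the counting rigorous I would pass to a minimal-height diagram of $k$ and use that the over- and under-closures $\hat k_+,\hat k_-$ are honest knot invariants of $k$ satisfying $d_G(\hat k_+,\hat k_-)\le m$; then I would show that any closure arising from a neighborhood realization of $k$ lies among the at most $2^m$ resolutions of this minimal diagram, by tracking the closure (equivalently virtual) crossings through the knotoid Reidemeister moves relating $D_1,D_2$ to the minimal diagram and checking that the resolution data is transported unchanged. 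This transport step, controlling how the geometric over/under data behaves under the moves that reduce $D_i$ to minimal height, is where the care is needed and is the crux of the argument.

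Finally, the second statement follows quickly. If $K_1,K_2\in\Omega_P$ then, by Proposition~\ref{diag_height}, every knotoid in $kspec(K_1)$ and in $kspec(K_2)$ has height at most $3$. Applying the first part with $n=3$ shows that the knotoids of each height $m\in\{1,2,3\}$ are all different; the remaining height-$0$ knotoids are the knot-type knotoids $\hat\kappa_1,\hat\kappa_2$, which differ because $d_G(\kappa_1,\kappa_2)>3>0$ forces $\kappa_1\ne\kappa_2$, using Corollary~\ref{knottype}. Since every knotoid in either spectrum has height $\le 3$, this exhausts all of them, so the two spectra are disjoint.
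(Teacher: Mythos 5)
Your overall strategy is the same as the paper's. The paper's entire argument for the first claim is the one-sentence assertion that a common height-$m$ knotoid ($m\le n$) would force $K_1$ and $K_2$ to differ by at most $n$ crossing changes, hence $d_G(\kappa_1,\kappa_2)\le n$, a contradiction; the second claim is obtained exactly as you obtain it, from the fact that generic polygonal embeddings carry only knotoids of height at most $3$ in their spectra (the paper invokes Proposition~\ref{diagrammatic} where you invoke Proposition~\ref{diag_height}, but the content used is the same). So you are not taking a different route, and your treatment of the second statement is complete.

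Where you go beyond the paper is in attempting to justify the key implication, and there you have correctly isolated the genuine difficulty, which the paper does not acknowledge: the shared knotoid $k$ enters the two spectra through two different diagrams $D_1,D_2$, of diagrammatic heights $m_1,m_2$ possibly strictly larger than the height $m$ of $k$, so the naive count only yields $d_G(\kappa_i,\hat k_-)\le m_i$ and hence $d_G(\kappa_1,\kappa_2)\le m_1+m_2$, which is not the bound the theorem needs. Your proposed repair --- transporting the over/under resolution data through the Reidemeister moves that reduce each $D_i$ to a minimal-height diagram, so that every neighborhood closure becomes one of the $2^m$ resolutions of a single minimal diagram --- is only sketched, and as stated it is doubtful: a Reidemeister-II pair of closure crossings whose resolution is mixed (one over, one under) does not cancel in the resolved classical diagram, so the resolution data need not descend to the reduced diagram at all. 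Thus the crux of the first assertion remains unproved in your write-up; you should either prove the descent claim (perhaps by showing the geometric realizations force unmixed resolutions on cancelling pairs) or find a bound on $d_G(\kappa_1,\kappa_2)$ in terms of $m$ alone by another route. To be fair, the paper's own proof asserts this step without any justification whatsoever, so your proposal is, if anything, more honest about where the work lies.
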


\begin{proof}
    Let $K_1,K_2$ be two embeddings of $\kappa_1, \kappa_2$. If $K_1,K_2$ have any common height $n$ knotoids, then $K_1,K_2$ differ by at most $n$ crossing changes, which would imply $d_G(\kappa_1,\kappa_2)\leq n$, contradiction. By Proposition \ref{diagrammatic}, for $K_1,K_2\in\Omega_P$ there are knotoids of only up to height 3, thus if $d_G(K_1,K_2)>3$, $kspec(K_1)\cap kspec(K_2)=\emptyset$. 
\end{proof}

\begin{corollary}
    Let $\kappa$ be a non trivial knot and let $u(\kappa)$ denote its unknotting number. Then for any generic embedding $K$ of $\kappa$ and any generic embedding $U$ of the unknot:

\noindent (i) If $u(\kappa)>3$, then $kspec(K)\cap kspec(U)=\emptyset$.

\noindent    (ii) If $u(\kappa)\geq j$, $j=2,3$, then $H_m(kspec(K))\cap H_m(kspec(U))=\emptyset$ for all $m<j$. 

\noindent    (iii) If $u(\kappa)=1$, then $k\in H_1(kspec(K))\cap H_1(kspec(U))$, if and only if $k$ is a knotoid that occurs from $K$ when projected with respect to a strongly essential secant that corresponds to a crossing change that would give the unknot.

\end{corollary}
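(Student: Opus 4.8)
The plan is to obtain (i) and (ii) as immediate specializations of Theorem \ref{uncrossing}, using that when one knot is the unknot the Gordian distance equals the unknotting number, i.e. $d_G(\kappa,\text{unknot})=u(\kappa)$. For (i), the hypothesis $u(\kappa)>3$ gives $d_G(\kappa,\text{unknot})>3$, and since $K,U\in\Omega_P$ are generic, the second assertion of Theorem \ref{uncrossing} yields $kspec(K)\cap kspec(U)=\emptyset$ directly. For (ii), fix $j\in\{2,3\}$ and any $m<j$; then $m\le j-1<j\le u(\kappa)=d_G(\kappa,\text{unknot})$, so $d_G>m$, and applying Theorem \ref{uncrossing} with $n=m$ shows the height-$m$ knotoids of $K$ and $U$ are pairwise distinct, that is $H_m(kspec(K))\cap H_m(kspec(U))=\emptyset$. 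Neither part needs a new idea beyond tracking these inequalities.

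The substance is in (iii), where $u(\kappa)=1$ so Theorem \ref{uncrossing} no longer forces disjointness at height $1$ and the overlap must be characterized. The organizing fact I would isolate first is that a knotoid $k$ of height exactly $1$ has a single essential crossing in its end-to-end closure, so its over-closure and under-closure produce two knots that differ by precisely one crossing change, which I will write $K_+(k)$ and $K_-(k)$. Moreover, whenever $k$ appears in the spectrum of an embedding $K'$, the closure coming from the deleted edge of $K'$ (the natural three-dimensional closure) realizes the knot type of $K'$; hence that knot type lies in $\{K_+(k),K_-(k)\}$.

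For the forward implication, suppose $k\in H_1(kspec(K))\cap H_1(kspec(U))$. By the organizing fact $\kappa\in\{K_+(k),K_-(k)\}$ and the unknot $\in\{K_+(k),K_-(k)\}$; since $\kappa$ is non-trivial these are the two distinct elements, so the single crossing change realizing $k$ converts $\kappa$ into the unknot. Pulling this back to $K$, the direction that produced $k$ is a secant $\beta$ of $K$ whose associated crossing $c$ is an unknotting crossing. It then remains to see that $\beta$ is strongly essential: height $1$ rather than $0$ forces $\beta$ to be essential, and because the crossing change at $c$ alters the knot type, $c$ is non-nugatory, at which point I would invoke Corollary 2.41 of \cite{Detoffoli2013} together with the mechanism of Lemma \ref{strongly} to rule out the essential-but-not-strongly-essential case. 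I expect this upgrade from \emph{essential} to \emph{strongly essential} to be the main obstacle, since it is exactly where the geometric commutator condition defining strong essentiality must be matched to the purely diagrammatic data of an unknotting crossing; the fact that the two closures are genuinely distinct (as $\kappa\neq$ unknot) is what should drive this step.

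For the converse, suppose $k$ arises from $K$ by projecting along a strongly essential secant $\beta$ whose crossing change gives the unknot. Then $k\in H_1(kspec(K))$ by Lemma \ref{strongly} and Proposition \ref{diag_height}. The key observation is that if $K'$ denotes $K$ with the crossing $c$ changed, an embedding of the unknot, then deleting the corresponding edge of $K'$ yields the \emph{same} open arc as $K_x$: the crossing change only reroutes the deleted edge and leaves the remaining open curve untouched. Hence the identical projection exhibits $k$ in the spectrum of $K'$, and as $K'$ is an unknot embedding, $k\in H_1(kspec(U))$. The one point still to address is that the corollary is phrased for a given embedding $U$; I would reconcile this by taking $U=K'$, or more generally by noting that which height-$1$ knotoids the unknot can realize is governed by its type through the connectivity and density of $X_{\text{unknot}}$ established earlier, so the particular choice of unknot embedding is immaterial.
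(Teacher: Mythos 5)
Your treatment of (i) and (ii) matches the paper exactly: both follow by specializing Theorem \ref{uncrossing} with $d_G(\kappa,\text{unknot})=u(\kappa)$, and the paper says no more than that. Your forward direction of (iii) also follows the paper's route (a common height-one knotoid has over/under closures realizing $\kappa$ and the unknot, hence the associated crossing of $K$ is an unknotting crossing). The divergence, and the genuine gap, is in the final step of (iii): upgrading the secant from \emph{essential} to \emph{strongly essential}. You propose to get this from Corollary 2.41 of \cite{Detoffoli2013} together with the mechanism of Lemma \ref{strongly}, but Corollary 2.41 states that a nugatory crossing cannot be strongly essential, i.e.\ strongly essential $\Rightarrow$ non-nugatory; you need the reverse implication (a crossing whose change alters the knot type is strongly essential), which does not follow from it, and Lemma \ref{strongly} runs in the opposite direction as well (it starts from a strongly essential crossing). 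You correctly flag this as the main obstacle but do not close it. The paper closes it by citing a different external result, Corollary 2.48 of \cite{Detoffoli2013}, which directly asserts that a crossing change altering the knot type corresponds to a strongly essential secant; without that input your argument for the forward implication is incomplete.

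On the converse direction of (iii): the paper's proof omits it entirely, so you are attempting more than the paper does. Your observation that the quantifier over $U$ is problematic is apt --- by Theorem \ref{Xk} there are generic unknot embeddings whose spectra contain only the trivial knotoid, so $H_1(kspec(U))$ can be empty and the ``if'' direction cannot hold for an arbitrary fixed $U$. Your proposed fix (take $U=K'$, or appeal to connectivity of $X_{\text{unknot}}$) does not repair this, since the knotoid spectrum is an embedding-dependent object and connectivity of $X_{\text{unknot}}$ does not make the spectra of different unknot embeddings agree. This is arguably a defect in the statement itself rather than in your argument, but as written your converse does not go through.
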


\begin{proof}
    (i) and (ii) follow from Theorem \ref{uncrossing}. (ii) If $k\in H_1(kspec(K))\cap H_1(kspec(U))$, then the over and under closure of $k$ gives $K$ and $U$. Thus, the corresponding secant of $K$ determines a crossing whose change in $K$ gives the unknot. Such a secant is strongly essential by Corollary 2.48 in \cite{Detoffoli2013}.
\end{proof}

\begin{theorem}\label{Xk}
    The pure knotoid spectrum of knots can distinguish the space of unknots, $X_{o}$, from the space $X_{\kappa}$ of any non-trivial knot $\kappa$ that satisfies the cosmetic crossing conjecture.
\end{theorem}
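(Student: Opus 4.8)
The plan is to isolate a single geometrically transparent unknot embedding whose pure knotoid spectrum is empty, and then to show that \emph{every} embedding of a non-trivial knot satisfying the cosmetic crossing conjecture has a non-empty pure knotoid spectrum. Since $K\mapsto pkspec(K)$ is a well-defined invariant of the neighborhood of a knot embedding, producing a value of $pkspec$ realized on an element of $X_o$ but on no element of $X_\kappa$ is enough to separate the two regions; this is the sense of the statement (and of the ``some embeddings of the unknot'' in the abstract).

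For the unknot, I would take $U_0$ to be a convex planar embedding (a round polygon) and let $U_{0,x}$ be the open arc obtained by deleting one edge. Because $U_{0,x}$ lies in a plane $P$, for every $\vec\xi\in S^2$ not lying in $P$ the orthogonal projection restricted to $P$ is an affine isomorphism, so $(U_{0,x})_{\vec\xi}$ is an embedded, hence crossingless, arc diagram, i.e.\ the trivial knotoid; the excluded directions $P\cap S^2$ form a great circle of measure zero. Thus $kspec(U_{0,x})$ is the singleton consisting of the trivial knotoid, and by Lemma \ref{const} the same holds throughout the based neighborhood, while by convexity the based neighborhoods at all vertices agree up to isometry. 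Hence $kspec(U_0)$ is this singleton and, after deleting the (trivial) knot-type knotoid, $pkspec(U_0)=\emptyset$. Equivalently, a convex planar curve has no essential secants and no $n$-secants with $n>2$, so nothing of positive height can appear.

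For the non-trivial side, let $K$ be any embedding of $\kappa$ and fix a generic projection direction $\vec\xi$; by \cite{Detoffoli2013} the diagram $K_{\vec\xi}$ has a strongly essential crossing $c$, which corresponds to a strongly essential secant $\beta$ joining points $p,q\in K$ in the direction $\vec\xi$. I would base the neighborhood at $p$ and open $K$ there; projecting the resulting open curve in a direction close to $\vec\xi$ places both endpoints near the image of $p=q$, so the closure arc crosses the strand through $q$ once and yields a knotoid $k$ of diagrammatic height $1$ whose virtual closure virtualizes $c$. Because $\kappa$ satisfies the cosmetic crossing conjecture, Lemma \ref{strongly} gives that this virtual closure has virtual crossing number exactly $1$; hence $k$ is not knot-type, so it is a non-trivial pure knotoid of height one. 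Therefore $pkspec(K)\neq\emptyset$ for \emph{every} $K\in X_\kappa$, and $pkspec$ separates $U_0$ (together with the surrounding round-unknot region of $X_o$) from all of $X_\kappa$.

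The main obstacle I anticipate is the unknot side: rigorously certifying that \emph{no} projection of $U_{0,x}$, nor of any curve in its neighborhood, nor any base point along $U_0$, produces a non-trivial pure knotoid. The convexity argument settles this for the round embedding, but one must be careful that the based-neighborhood hypothesis (constant spectrum, Lemma \ref{const}) together with the union over base points genuinely leaves the spectrum a single trivial knotoid. A secondary, more routine point is matching the strongly essential crossing of $K_{\vec\xi}$ to an honest base-point opening in $N_h(K)$ realizing the knotoid $k$; this is where the secant reformulation (basing at an endpoint of the strongly essential secant and projecting along it, as in the proof of Proposition \ref{diag_height}) does the bookkeeping cleanly.
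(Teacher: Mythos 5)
Your proposal is correct and follows essentially the same route as the paper: a planar (round) unknot embedding whose neighborhood spectrum is the trivial knotoid alone, versus the strongly essential secant guaranteed by \cite{Detoffoli2013} for any embedding of a non-trivial knot, which via Lemma \ref{strongly} yields a height-one pure knotoid in the spectrum. Your write-up merely supplies more detail than the paper's proof on two points it leaves implicit — the convexity argument certifying the trivial spectrum of the planar unknot, and the realization of the height-one knotoid by basing at an endpoint of the secant and projecting along it (the mechanism of Proposition \ref{diag_height}).
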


\begin{proof}
    There exist embeddings of the unknot at the neighborhood of which the knotoid spectrum consists of only the trivial knotoid, such as generic embeddings of the circle obtained as $h$-deformations of the planar circle. On the other hand, any embedding of a non-trivial knot contains at least one strongly essential secant (by Theorem 2.31 in \cite{Detoffoli2013}). By Lemma \ref{strongly}, if this is a knot that satisfies the cosmetic crossing conjecture, then this gives rise to a height 1 knotoid in the knotoid spectrum of the knot.
\end{proof}

\section{Topological invariants at the neighborhood of knots}

By \cite{Panagiotou2021} we obtain a general framework by which invariants of knots can be rigorously defined for open curves in 3-space to give continuous functions of the curve coordinates that tend to topological invariants when the endpoints tend to coincide to form a knot.

\begin{definition}
    Let $f$ denote an invariant of knots and knotoids and let $l$ denote an open curve in 3-space. We define an $f$-measure of entanglement of open curves in 3-space

    \begin{equation}
        f(l)=\frac{1}{4\pi}\int_{\vec{\xi}\in S^2}f(l_{\vec{\xi}})dA
    \end{equation}
\end{definition}

\noindent We can also express $f$ as $f(l)=\sum_{i\in kspec(l)}p_i(h)f(l_i)$, where $l_i$ are  knotoids that appear in the projections of $l$ and where $p_i(h)$ is the geometric probability that a projection of $l$ gives the knotoid $l_i$.

Then $f$ is a continuous function of the coordinates of $l$. If the value of $f$ for a knot-type knotoid is that of the knot type, then $f$ converges to the topological invariant of the resulting knot, as the endpoints of $l$ tend to coincide to form a knot \cite{Panagiotou2021}.

In the following of this section let $f$ be a measure of complexity that is a continuous function of the curve coordinates in $X$, and topological invariant when restricted to knots and an invariant of knotoids.

\begin{definition}
    Let $f$ be an invariant of knots as above. Let $l\in X$, open simple curve in 3-space, then $fkspec(l)=\lbrace{f(k_i), k_i\in kspec(l)\rbrace}$ denotes the spectrum of $f$, which is the set of values of $f$ on the knotoids in the knotoid spectrum of $l$. Let $K$ denote an embedding of a knot $\kappa$. We define the spectrum of $f(K)$ to be the set of values of $f$ in the neighborhood of the knot, ie $f(N_h)=\lbrace f(l)|l\in N_h(K)\rbrace$. We define the knotoid spectrum of $f(K)$ to be $fkspec(K)=\lbrace{fkspec(l)|l\in N_h(K)\rbrace}$. 
\end{definition}

\begin{remark}
    Similarly, one can define the maximum/minimum/average of $f$ in the spectrum of $K$. We can define with analogous properties the virtual and geometric spectra $fvspec(K), fgspec(K)$, as well as for the knot spectrum of a knot.
\end{remark}

\begin{corollary}\label{fdg}
   Let $f$ denote an invariant of knots and  complete invariant of pure knotoids and let $\kappa_1, \kappa_2$ be two knot types and suppose $f(\kappa_1)=f(\kappa_2)$. If $d_G(\kappa_1,\kappa_2)>n$, then for any two embeddings of $\kappa_1,\kappa_2$, $f$ can distinguish all the pure knotoids of height up to $n$ in their spectra.  If $K_1,K_2\in\Omega_P$, $d_G(\kappa_1,\kappa_2)>3$, then $fpkspec(K_1)\cap fpkspec(K_2)=\empty$.
\end{corollary}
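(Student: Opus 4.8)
The plan is to reduce Corollary \ref{fdg} to Theorem \ref{uncrossing} together with the completeness hypothesis on $f$. First I would invoke Theorem \ref{uncrossing}: since $d_G(\kappa_1,\kappa_2)>n$, for any embeddings $K_1,K_2$ of $\kappa_1,\kappa_2$ the knotoids of height $m\le n$ in their respective spectra are all pairwise distinct as knotoid classes. In particular, no pure knotoid of height at most $n$ appears in both $kspec(K_1)$ and $kspec(K_2)$. The content to be added on top of this is purely about the separating power of $f$ on those knotoids.

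Next I would use the assumption that $f$ is a complete invariant of pure knotoids. Fix any pure knotoid $k_1\in pkspec(K_1)$ and any pure knotoid $k_2\in pkspec(K_2)$, each of height at most $n$. By Theorem \ref{uncrossing} these represent distinct knotoid classes, $k_1\neq k_2$. Completeness of $f$ on pure knotoids then gives $f(k_1)\neq f(k_2)$ directly, which is exactly the statement that $f$ distinguishes all the pure knotoids of height up to $n$ appearing in the two spectra. I should be slightly careful here that the knotoids in question are genuinely pure and not knot-type: the knot-type knotoid is excluded by passing from $kspec$ to $pkspec$, and since $f(\kappa_1)=f(\kappa_2)$ the single knot-type knotoid in each spectrum (Corollary \ref{knottype}) is precisely the ambiguous case that completeness on \emph{pure} knotoids does not resolve, which is why the statement is phrased only about pure knotoids.

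For the second, sharper assertion I would specialize to generic polygonal embeddings $K_1,K_2\in\Omega_P$ with $d_G(\kappa_1,\kappa_2)>3$. By Proposition \ref{diagrammatic} (via Proposition \ref{diag_height}), every knotoid in $kspec(K_1)$ or $kspec(K_2)$ has height at most $3$, so the height restriction becomes vacuous and Theorem \ref{uncrossing} gives $kspec(K_1)\cap kspec(K_2)=\emptyset$. Restricting to pure knotoids, $pkspec(K_1)$ and $pkspec(K_2)$ are disjoint sets of distinct knotoid classes, and applying completeness of $f$ to every pair as above yields $fpkspec(K_1)\cap fpkspec(K_2)=\emptyset$.

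The main obstacle, and the point requiring the most care, is not the disjointness of the knotoid classes (which is handed to us by Theorem \ref{uncrossing}) but the correct bookkeeping of the completeness hypothesis: completeness is assumed only for \emph{pure} knotoids, so the argument must avoid comparing values of $f$ on the knot-type knotoids, whose coincidence $f(\kappa_1)=f(\kappa_2)$ is exactly what is assumed and cannot be separated. I would therefore state explicitly that the conclusion concerns $fpkspec$ rather than $fkspec$, and verify that removing the knot-type knotoid leaves only classes on which completeness applies. A secondary subtlety is ensuring that distinctness of two pure knotoids as abstract classes (the output of Theorem \ref{uncrossing}) is the right input for a \emph{complete} invariant, i.e.\ that completeness is being used in the direction ``distinct classes $\Rightarrow$ distinct values,'' which is immediate once one recalls that a complete invariant separates all inequivalent objects.
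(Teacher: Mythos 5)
Your proposal is correct and follows exactly the route the paper intends: the paper states this as an unproved corollary of Theorem~\ref{uncrossing}, and your argument simply makes explicit the two steps involved (disjointness of the height-$\le n$ knotoid classes from Theorem~\ref{uncrossing}, respectively full disjointness of the spectra in the generic polygonal case via Proposition~\ref{diagrammatic}, followed by the ``distinct classes $\Rightarrow$ distinct values'' direction of completeness on pure knotoids). Your observation about why the hypothesis $f(\kappa_1)=f(\kappa_2)$ forces the statement to be phrased in terms of $fpkspec$ rather than $fkspec$ is also the right bookkeeping.
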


\begin{corollary}
    Let $f$ denote an invariant of knots and knotoids. Let $\kappa$ be a non trivial knot and let $u(\kappa)$ denote its unknotting number and let $o$ denote the unknot. Suppose that $f(\kappa)=f(o)$, and that $f$ is a complete invariant of  knotoids. Then if $u(\kappa)>1$, for any generic embedding $K$ of $\kappa$ and any generic embedding $O$ of the unknot, $f$ can distinguish the two by its values in their knotoid spectra. 
\end{corollary}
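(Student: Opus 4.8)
The plan is to reduce this statement to the earlier Corollary \ref{fdg} applied to the special case where one of the knot types is the unknot. Recall that the unknotting number $u(\kappa)$ is precisely the Gordian distance $d_G(\kappa, o)$ between $\kappa$ and the unknot $o$. So the hypothesis $u(\kappa) > 1$ is exactly the statement $d_G(\kappa, o) > 1$, which matches the $n = 1$ case of Corollary \ref{fdg} with $\kappa_1 = \kappa$ and $\kappa_2 = o$. The assumption $f(\kappa) = f(o)$ supplies the needed equality of the invariant on the two knot types, so that $f$ does not already separate the knot-type knotoids themselves; the content is that $f$ nevertheless separates the \emph{pure} knotoids in the respective spectra.

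First I would set up the translation: let $\kappa$ be non-trivial with $u(\kappa) > 1$, let $O$ be a generic embedding of the unknot and $K$ a generic embedding of $\kappa$. Since $u(\kappa) = d_G(\kappa, o) > 1$, Theorem \ref{uncrossing} (with $n = 1$) guarantees that the height-$1$ knotoids in $kspec(K)$ and $kspec(O)$ are all distinct as knotoids; more generally no common knotoid of height at most $1$ can appear, since a shared height-$m$ knotoid with $m \le 1$ would exhibit $K$ and $O$ as differing by at most one crossing change, contradicting $d_G > 1$. Because $f$ is assumed to be a complete invariant of knotoids, distinct pure knotoids are assigned distinct values of $f$. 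Hence the sets of $f$-values on the height-$\le 1$ pure knotoids in the two spectra are disjoint, which is what it means for $f$ to distinguish the two embeddings by its values in their knotoid spectra.

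The one point requiring care is why the spectra actually contain pure knotoids of height exactly $1$ that witness the distinction, rather than the separation being vacuous. For the non-trivial knot $K$ this follows from the existence of a strongly essential secant (Theorem 2.31 in \cite{Detoffoli2013}) together with Lemma \ref{strongly}, which produces a genuine height-$1$ knotoid in $pkspec(K)$; the completeness of $f$ on knotoids then forces its $f$-value to differ from that of any knotoid arising in the unknot's spectrum of comparable height. I would phrase the conclusion as $fH_1(kspec(K)) \cap fH_1(kspec(O)) = \emptyset$, i.e.\ the height-$1$ slices of the two $f$-spectra are disjoint, so $f$ evaluated across the neighborhood knotoid spectra separates $K$ from $O$.

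The main obstacle, and the step I would scrutinize most, is ensuring the logical quantifiers line up with the hypotheses the statement actually grants. The statement assumes $f$ is a complete invariant of knotoids and a topological invariant of knots with $f(\kappa) = f(o)$, but it does \emph{not} restrict to $\Omega_P$, so I cannot invoke the strong ``all knotoids differ'' conclusion of Theorem \ref{uncrossing} that required $d_G > 3$ and generic polygons. The honest claim is therefore only about the height-$\le 1$ part of the spectra, exactly as Corollary \ref{fdg} delivers for $n = 1$. I would make sure the proof does not overreach to higher-height knotoids, since for $u(\kappa) = 2$ the knot and unknot spectra could well share knotoids of height $2$, and the distinguishing power at height $\le 1$ is precisely the sharp consequence of $u(\kappa) > 1$.
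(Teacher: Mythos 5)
Your reduction is exactly the paper's (implicit) route: the corollary is the special case of Corollary \ref{fdg} and Theorem \ref{uncrossing} with $\kappa_2=o$, using $u(\kappa)=d_G(\kappa,o)>1$ to conclude that the height-$\le 1$ knotoids in the two spectra are pairwise distinct and then invoking completeness of $f$ on (pure) knotoids; your restriction of the conclusion to the height-$\le 1$ slice is also the correct reading of what the hypotheses deliver. One caveat on the extra non-vacuousness step you added: Lemma \ref{strongly}, which you use to produce a genuine height-$1$ knotoid in $pkspec(K)$ from a strongly essential crossing, is stated only for knots satisfying the cosmetic crossing conjecture, and that hypothesis is not granted in this corollary; without it the strongly essential crossing could in principle yield a knot-type (height-$0$) knotoid, so either that hypothesis should be imported or the conclusion should be read, as in Corollary \ref{fdg}, purely as the pairwise statement that $f$ separates whatever height-$\le 1$ pure knotoids do appear in the two spectra.
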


\begin{theorem}\label{h1}
    Let $f$ be a topological invariant that distinguishes all height one knotoids from the trivial knotoid. Then $fkspec$ in the space of knots differentiates the space of unknots from that of any non-trivial knot that satisfies the cosmetic crossing conjecture.
\end{theorem}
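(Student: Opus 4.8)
The plan is to leverage Theorem \ref{Xk} almost verbatim, since that theorem already establishes the purely topological dichotomy between the unknot and non-trivial knots satisfying the cosmetic crossing conjecture; here the task is to transport that dichotomy through the invariant $f$. First I would recall the two facts from the proof of Theorem \ref{Xk}: (a) there exist embeddings of the unknot (for instance generic $h$-deformations of the planar circle) whose entire knotoid spectrum consists of only the trivial knotoid, and (b) for any embedding $K$ of a non-trivial knot $\kappa$ satisfying the cosmetic crossing conjecture, $kspec(K)$ contains at least one height one knotoid, which arises from projecting along a strongly essential secant (guaranteed by Theorem 2.31 in \cite{Detoffoli2013}) and is genuinely of height one by Lemma \ref{strongly}.

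Next I would compute what $f$ sees in each case. For the special unknot embedding $U$ from (a), every $l\in N_h(U)$ has $kspec(l)=\{\text{trivial knotoid}\}$, so by Definition \ref{knotspecknot} the value $f$ takes on every member of $kspec(U)$ is exactly $f(\text{trivial knotoid})$; hence $fkspec(U)$ is the singleton $\{f(\text{trivial})\}$. For the non-trivial knot $K$, I would exhibit an open curve $l\in N_h(K)$ based at one of the endpoints of the strongly essential secant whose spectrum contains the height one knotoid $k_1$ produced in (b); then $f(k_1)\in fkspec(K)$. By the hypothesis that $f$ distinguishes all height one knotoids from the trivial knotoid, $f(k_1)\neq f(\text{trivial})$.

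Finally I would assemble the contrast: $fkspec(U)=\{f(\text{trivial})\}$ does not contain the value $f(k_1)$, whereas $fkspec(K)$ does, so $fkspec(U)\neq fkspec(K)$. Since this holds for \emph{some} embedding $U$ of the unknot against \emph{every} embedding $K$ of every non-trivial knot satisfying the conjecture, $fkspec$ differentiates the space of unknots $X_o$ from the space $X_\kappa$ of any such non-trivial $\kappa$, which is the assertion. I would be careful to phrase the conclusion as distinguishing the spaces (matching the statement ``differentiates the space of unknots from that of any non-trivial knot''), rather than claiming every unknot embedding is separated, since (b) does not preclude a non-trivial knot from also having the trivial knotoid elsewhere in its spectrum — the separation comes from the presence of the extra value $f(k_1)$, not from its absence on the unknot side.

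The main obstacle, such as it is, lies in the transition from height one \emph{knotoids} to the membership claim $f(k_1)\in fkspec(K)$: one must be sure that the strongly essential secant of $K$ actually yields, via a based neighborhood $N_{h,x}(K)$, an open curve $l$ whose projection realizes $k_1$ as a genuine height one knotoid in $kspec(l)$. This is precisely what Proposition \ref{diag_height} and the construction in the proof of Theorem \ref{Xk} supply (projecting in the direction of the secant through the basepoint and perturbing slightly), so the argument is essentially a repackaging of Theorem \ref{Xk} with the invariant $f$ applied; the only genuine input beyond that theorem is the hypothesis separating height one knotoids from the trivial knotoid, which converts the set-theoretic difference of spectra into a difference detected by $f$.
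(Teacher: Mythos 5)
Your proposal is correct and follows essentially the same route as the paper: it invokes the two facts from the proof of Theorem \ref{Xk} (an unknot embedding whose spectrum is only the trivial knotoid, and a height one knotoid in the spectrum of any embedding of a non-trivial knot satisfying the cosmetic crossing conjecture) and then applies the hypothesis on $f$ to separate the resulting $fkspec$ sets. Your added care about the asymmetry of the separation (the non-trivial knot's spectrum containing an extra value rather than the unknot's lacking one) is a faithful elaboration of what the paper leaves implicit.
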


\begin{proof}
    As in the proof of Theorem \ref{Xk}, there exist embeddings of the unknot at the neighborhood of which the knotoid spectrum consists of only the trivial knotoid, which has height zero. On the other hand, the knotoid spectra of the neighborhood of any embedding of a non-trivial knot contains knotoids of height 1. 
\end{proof}

As an example of the applicability of this result, we point out the following Corollary:

\begin{corollary}
    The Jones polynomial can distinguish the neighborhoods of at least some embeddings of the unknot from the neighborhoods of any embedding of any non-trivial knot if it can distinguish all height one knotoids from the trivial knotoid.
\end{corollary}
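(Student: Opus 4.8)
The plan is to invoke Theorem \ref{h1} essentially verbatim, instantiating the abstract invariant $f$ with the Jones polynomial $V$, and then verifying that the Jones polynomial meets the single hypothesis of that theorem. First I would observe that the Jones polynomial is a topological invariant of knots, that it extends to knotoids (the Jones polynomial of knotoids is well-defined via the bracket polynomial computed on knotoid diagrams, as in the knotoid literature cited earlier), and that it therefore fits the template of the invariant $f$ in the statement of Theorem \ref{h1}. The only nontrivial input required is the conditional hypothesis: \emph{if} $V$ distinguishes every height one knotoid from the trivial knotoid. Because this is assumed in the Corollary's own hypothesis (``if it can distinguish all height one knotoids from the trivial knotoid''), the proof is purely a matter of checking that Theorem \ref{h1} applies and then reading off its conclusion.

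The key steps, in order, would be: (1) State that $V$, the Jones polynomial, is a topological invariant on knots and an invariant on knotoids, so it qualifies as an admissible $f$ in the sense required by Theorem \ref{h1}. (2) Adopt the Corollary's hypothesis that $V$ distinguishes all height one knotoids from the trivial knotoid; this is exactly the hypothesis ``$f$ distinguishes all height one knotoids from the trivial knotoid'' of Theorem \ref{h1}. (3) Apply Theorem \ref{h1} directly to conclude that $Vkspec$ in the space of knots differentiates the space of unknots from that of any non-trivial knot satisfying the cosmetic crossing conjecture. (4) Translate this back into the language of the Corollary: $V$ distinguishes the neighborhoods of at least some embeddings of the unknot (precisely those, like $h$-deformations of the planar circle, whose knotoid spectrum consists only of the trivial knotoid) from the neighborhoods of any embedding of any non-trivial knot satisfying the conjecture.

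The substance of the argument lives entirely inside Theorem \ref{h1}, which I may assume: there exist embeddings of the unknot whose neighborhood knotoid spectrum contains only the height-zero trivial knotoid, whereas by Lemma \ref{strongly} and Theorem \ref{Xk} the neighborhood of any non-trivial knot satisfying the cosmetic crossing conjecture necessarily contains a height one knotoid arising from a strongly essential secant. Since $V$ by hypothesis separates every height one knotoid from the trivial knotoid, the value sets $Vkspec$ of the two neighborhoods must differ. Thus the main obstacle is not in this Corollary at all but sits upstream, namely in establishing the hypothesis that the Jones polynomial truly separates all height one knotoids from the trivial knotoid --- this is why the statement is conditional, and I would flag it as such rather than attempt to prove it here.

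\begin{proof}
The Jones polynomial $V$ is a topological invariant of knots and is a well-defined invariant of knotoids (via the bracket polynomial on knotoid diagrams); hence it is an admissible invariant $f$ in the sense of Theorem \ref{h1}. By hypothesis, $V$ distinguishes all height one knotoids from the trivial knotoid, which is precisely the hypothesis of Theorem \ref{h1} with $f=V$. Applying Theorem \ref{h1} directly, $Vkspec$ in the space of knots differentiates the space of unknots from that of any non-trivial knot that satisfies the cosmetic crossing conjecture. Concretely, generic $h$-deformations of the planar circle have neighborhood knotoid spectra consisting only of the trivial knotoid, while the neighborhood of any non-trivial knot satisfying the conjecture contains a height one knotoid; since $V$ separates these, the associated value sets differ, proving the claim.
\end{proof}
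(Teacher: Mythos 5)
Your proposal is correct and matches the paper's intent exactly: the paper presents this corollary as a direct specialization of Theorem \ref{h1} to $f = V$ (the Jones polynomial) and offers no separate proof, which is precisely the instantiation you carry out. The only point worth noting is that the corollary's statement silently inherits the ``satisfies the cosmetic crossing conjecture'' restriction from Theorem \ref{h1}, which you correctly make explicit.
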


\subsection{Topological invariants in the neighborhood of mutant knots}

\begin{definition}[Concatenation of open curves in 3-space, 3D-tangle]
    For two simple open curves in 3-space, $l_1, l_2$ we define their concatenation by translating the one curve to begin when the other ends. 
Notice that with probability 1, this gives a simple open curve in $\mathbb{R}^3$.
In the special case that $l_1,l_2$ are each contained in a sphere with one endpoiont on the boundary each, we denote their concatenation as $l=l_1\# l_2$. We call a 3D tangle a collection of open curves in 3-space such that the endpoints of the curves touch a sphere that encloses them. If the endpoints of $l_1, l_2$ lie in a sphere so that when concatenated they form a closed loop, then the result is the connected sum of the end-to-end closure on the sphere of $l_1$ and $l_2$. 
\end{definition}

\begin{definition}(mutant embedding)
    Let $l$ be an open curve in 3-space such that $l=l_1\#l_T$, where $l_T$ is an open curve 3D tangle. We call two open curves in 3-space mutants, if one is obtained by the other by rotation and regluing of $l_T$. We denote $m(l)=l_1\#l_{m(T)}$. Similarly, let $K$ be an embedding of a knot $\kappa$ such that $K=l_T\cup l_1$, is the union of two open curve tangles such that $l_T$ is contained in a sphere. Then the mutant embedding of $K$ is $m(K)=l_{M(T)}\cup l_1$, where $l_{M(T)}$ is the rotation of the open curve tangle $l_T$ inside the sphere.
\end{definition}

\begin{remark}
In the above, we require that the sphere is a geometric sphere and not topological.    
\end{remark}

\begin{figure}[ht!]
    \centering
\includegraphics[scale=0.15]{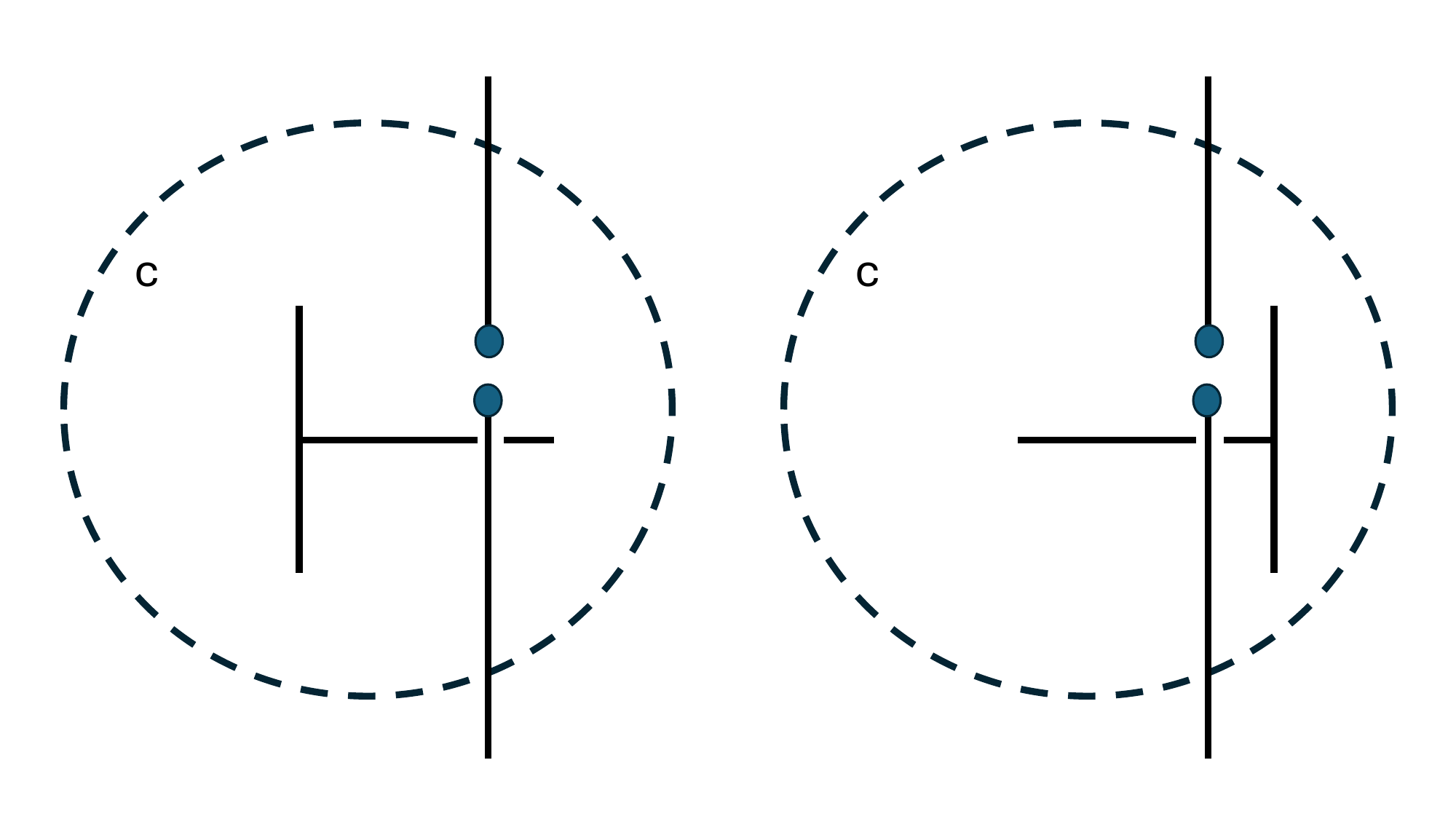} 
    \caption{Parts of projections of two mutant open curves in the neighborhood of two mutant knot embeddings. Outside the circle, both projections are identical. Inside the circle, the two diagrams may not be related by rotation.}
    \label{mutants}
\end{figure}

\begin{theorem}
    Let $K=l_1\#l_T$ be an embedding of a knot $\kappa$ and let $m(K)=l_1\#l_{M(T)}$ be a mutant embedding of $K$. Let $f$ denote an invariant of knots that cannot distinguish mutant knots and mutant knotoids. Then $fpkspec(K)\neq fpkspec(m(K))$, unless $f$ also fails to distinguish non-mutant knotoids that appear in their spectra.
\end{theorem}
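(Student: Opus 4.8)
The plan is to prove the contrapositive, leveraging the structure of the knotoid spectra as unions over the two based neighborhoods. Suppose $fpkspec(K) = fpkspec(m(K))$; I want to conclude that $f$ must fail to distinguish some non-mutant knotoids appearing in the two spectra. First I would decompose each spectrum according to where the basepoint of the open curve lies relative to the tangle sphere. For a basepoint $x$ outside the tangle $l_T$, the open curve $l_x$ agrees with its mutant counterpart outside the sphere, and inside the sphere the two differ only by the mutation rotation of $l_T$. A projection direction $\vec\xi$ then produces a knotoid diagram that, outside the projected circle, is identical for $K$ and $m(K)$, while inside it may differ by the mutation (see Figure \ref{mutants}). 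The key observation is that a knotoid obtained this way is itself a \emph{mutant knotoid}: it has a tangle region on which the two diagrams are related by the same rotation-and-reglue operation. By hypothesis $f$ cannot distinguish mutant knotoids, so these contribute equal $f$-values to both spectra and cannot be used to separate them.

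Next I would isolate the knotoids that are \emph{not} mutant-related across the two embeddings. These arise precisely when the basepoint $x$ lies inside (or on an essential secant through) the tangle $l_T$, or when the projection direction cuts the tangle sphere so that the mutation does not act as a clean rotation of the in-circle diagram — in other words, the endpoints or an essential arc of the open curve pass through the mutated region so that the resulting knotoid's tangle decomposition is destroyed. For such projections the knotoid of $K$ and the corresponding knotoid of $m(K)$ need not be mutants of each other; I will call these the non-mutant pure knotoids in the spectra. The forward direction then runs as follows: the mutant knotoids contribute identical $f$-values by assumption, so if additionally $f$ takes equal values on the non-mutant pure knotoids, then $fpkspec(K)$ and $fpkspec(m(K))$ coincide as sets; contrapositively, if $fpkspec(K)\neq fpkspec(m(K))$, the discrepancy must come from the non-mutant pure knotoids, which is exactly the stated conclusion.

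The main obstacle, which I would treat as the technical heart of the argument, is verifying that projections through basepoints \emph{outside} the tangle really do yield honest mutant knotoids, so that the "cannot distinguish mutant knotoids" hypothesis applies to them. This requires showing that the geometric tangle sphere of the mutant embedding descends to a diagrammatic tangle region in the knotoid diagram: I would argue that since the mutation is a rigid rotation of $l_T$ inside a geometric sphere (as stipulated in the Remark requiring a geometric, not topological, sphere), the projection of the sphere is a disk (or its boundary an ellipse/circle) and the projected diagram outside this disk is the image of a part of the curve unaffected by the rotation, hence identical for $K$ and $m(K)$. One must check that the two projected in-disk tangles are related by a planar rotation realizing knotoid mutation; this is where the geometric sphere hypothesis is essential, since only then does the spatial rotation project to the diagrammatic mutation move consistently across all admissible $\vec\xi$. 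I would also need to confirm that the correspondence of based neighborhoods $N_{h,x}(K) \leftrightarrow N_{h,x}(m(K))$ for $x$ outside the sphere is a bijection preserving the mutant relationship, which follows from the fact that the mutation is supported inside the sphere and the neighborhoods are built from the same ambient-isotopy and distance constraints. Once these geometric facts are in hand, the set-theoretic bookkeeping of the spectra completes the proof.
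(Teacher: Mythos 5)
Your proposal follows essentially the same route as the paper: decompose the open curves as $l_1^{\epsilon}\#l_T$ versus $l_1^{\epsilon}\#l_{M(T)}$, project the geometric tangle sphere to a circle $c$, note that $f$ is blind on the projections that yield genuine mutant knotoid pairs, and locate the non-mutant pairs in projections where the endpoints (which lie on the un-mutated arc $l_1$) land inside the projected disk and destroy the tangle decomposition. The paper's proof concentrates on explicitly constructing those non-mutant pairs (choosing $\vec{\xi}$ so that the endpoints of $l_1^{\epsilon}$ project inside $c$, with the endpoint arcs lying entirely above or entirely below the projected tangle), which is exactly the mechanism you identify in your second paragraph, so the two arguments coincide in substance.
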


\begin{proof}

Consider a piecewise linear embedding of a knot $K$ that contains a tangle $K=l_1\#l_T$, where $l_1,l_T$ are collections of open curves. 
Let $l\in N_x(K)$, where $x\in l_1$, then $l=l_1^{\epsilon}\#l_T$, where $l_1^{\epsilon}$ contains the endpoints of $l$. Let the mutant of $K$ be 
$m(K)=l_1\#l_{M(T)}$ and let $m(l)\in N_x(m(K))$, then  $m(l)=l_1^{\epsilon}\#l_{M(T)}$.
Let $S$ denote the sphere that contains $l_T$ (and also $l_{M(T)}$). In any projection direction, the projection of $S$ is a circle, we denote $c$, and the projection of the tangle is contained within $c$. Let us consider a projection with respect to a vector $\vec{\xi}\in S^2$. Then $(l_T)_{\vec{\xi}}$ and $(l_{M(T)})_{\vec{\xi}}$ are inside $c$ but $(l_1)_{\vec{\xi}}$ may overlap with the projection of $c$. Thus, we can express $(l)_{\vec{\xi}}=k_1\#k_2$, (resp. $(m(l))_{\vec{\xi}}=k_1\#k_2'$) where $k_1$ is outside $c$ and $k_2$ (resp. $k_2'$) is in $c$. Thus $k_2$ (resp. $k_2'$) is a linkoid that consists of $(l_T)_{\vec{\xi}}$ (resp. $(l_{M(T)})_{\vec{\xi}}$) and arcs of $l_1$ that project inside $c$. Let us take $\vec{\xi}\in S^2$ such that the endpoints of $l_1^{\epsilon}$ project inside $c$, thus $k_2$ (resp. $k_2'$) contains the endpoints. Notice that since $l_1$ does not intersect $S$, its arc that contains $x$ projects either above $(l_T)_{\vec{\xi}}$ and $(l_{M(T)})_{\vec{\xi}}$ or below. Thus, the arcs that contain the endpoints of $k_2$ (resp. $k_2'$) are either both above or both below $(l_T)_{\vec{\xi}}$ (resp. $(l_{M(T)})_{\vec{\xi}}$). Then $l_{\vec{\xi}}=k_1\# (l_T)_{\vec{\xi}}'$ and $l_{\vec{\xi}}=k_1\# ((l_{M(T)})_{\vec{\xi}})'$  where $(l_T)_{\vec{\xi}}', ((l_{M(T)})_{\vec{\xi}})'$ are not in general mutant knots (see Figure \ref{mutants}).

\end{proof}

As an example of the applicability of this result, we point out the following Corollary:

\begin{corollary}
    The Jones polynomial can distinguish the open curve neighborhoods of mutant embeddings of the Conway and Kinoshita-Terasaka knots, unless it fails to distinguish non-mutant pure knotoids in their spectra.  
\end{corollary}

This leads to the following conjecture:

\begin{conjecture}
    The Jones polynomial can distinguish the open curve neighborhoods of any two embeddings of the Conway and Kinoshita-Terasaka knots.  
\end{conjecture}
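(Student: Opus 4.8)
The plan is to promote the preceding Corollary to an unconditional statement by discharging its ``unless'' clause for the Jones polynomial $f$ and then extending the comparison from mutant embedding pairs to arbitrary pairs. Recall that the Conway knot $C$ and the Kinoshita--Terasaka knot $KT$ are related by mutation of the Conway $2$-tangle and share a Jones polynomial, so $f$ cannot separate the (unique) knot-type knotoids of their spectra, and by Corollary~\ref{knottype} all of the separating power must reside in $pkspec$. The preceding theorem on mutant embeddings already shows that when $K'$ is a mutant embedding $m(K)$ of an embedding $K$ of $C$, one has $fpkspec(K)\neq fpkspec(m(K))$ \emph{unless} $f$ collapses the non-mutant tangle-closure knotoids $(l_T)'_{\vec{\xi}}$ and $(l_{M(T)})'_{\vec{\xi}}$ appearing in that proof. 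The content of the Conjecture is therefore twofold: (i) verify that $f$ does separate those particular non-mutant knotoids, and (ii) remove the hypothesis that the two embeddings form a mutant pair.

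For (i) I would analyze the tangle-closure knotoids concretely. Opening the curve at a point of $l_1$ and projecting so that the connecting strand runs entirely above (or entirely below) the Conway tangle yields a knotoid whose over/under closure arc is not invariant under the order-two rotation defining the mutation; hence $(l_T)'_{\vec{\xi}}$ and $(l_{M(T)})'_{\vec{\xi}}$ are genuinely non-mutant, and the open-arc asymmetry that makes them so is exactly what breaks the mechanism rendering mutation invisible to the closed-knot Jones polynomial. I would then compute the Turaev bracket of these two knotoids directly for the Conway tangle and exhibit distinct values. For (ii) I would use that every embedding of a nontrivial knot carries essential, indeed alternating, quadrisecants \cite{Kuperberg1994,Denne2006}, so that, as in Proposition~\ref{diagrammatic}, each spectrum is forced to contain height-$\le 3$ pure knotoids built from such a tangle closure; combined with Lemma~\ref{const} and the connectedness of $X_C$ and $X_{KT}$, the aim is to isolate a \emph{canonical} minimal-height pure knotoid whose Jones value lies in $fpkspec(K)$ for every embedding $K$ of $C$ yet in $fpkspec(K')$ for no embedding $K'$ of $KT$.

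The principal obstacle is step (ii), and specifically the passage from separating individual knotoids to separating the spectra as \emph{sets}. Because $pkspec$ depends on the embedding through its quadrisecant geometry rather than on the knot type alone, and because an arbitrary embedding of $KT$ need not present the Conway tangle inside a geometric sphere as required by the mutant construction, one cannot simply reduce to the mutant-pair theorem. Instead one must rule out any accidental coincidence in which some unrelated knotoid elsewhere in a $KT$ spectrum happens to reproduce the distinguishing Jones value of $C$, uniformly over all embeddings. I expect this uniform, set-level separation---rather than the single bracket computation of step (i)---to be the hard part, and it is precisely the gap between the conditional Corollary and the unconditional Conjecture.
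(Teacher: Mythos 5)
There is no proof to compare against here: the statement is stated in the paper as a \emph{Conjecture}, immediately after the conditional Corollary on mutant embeddings, and the paper offers no argument for it. Your proposal is a research plan rather than a proof, and you yourself flag that its decisive step is open. Concretely, neither of your two steps is actually carried out. Step (i) ends with ``I would then compute the Turaev bracket \dots and exhibit distinct values''; no such computation is performed, and without it you have not even discharged the ``unless'' clause of the mutant-embedding theorem for a single pair of mutant embeddings, which is the minimal content one would need before attempting the general statement. Step (ii) is where the conjecture genuinely lives, and your proposed mechanism does not work as described: the quadrisecants of an arbitrary embedding of the Kinoshita--Terasaka knot guaranteed by \cite{Kuperberg1994,Denne2006} have no a priori relation to the mutation sphere or the Conway tangle, so the height-$\le 3$ knotoids they produce are not ``built from such a tangle closure,'' and nothing in Lemma~\ref{const} or the connectedness of $X_{\kappa}$ produces a \emph{canonical} pure knotoid shared by all embeddings of a given knot type --- the whole point of the paper is that $pkspec(K)$ varies with the embedding. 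Note also that the paper's only tool for set-level disjointness of spectra, Theorem~\ref{uncrossing}, requires Gordian distance greater than $1$ (or $3$), which is not established for the Conway/Kinoshita--Terasaka pair, so it cannot be invoked here.

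In short, you have correctly diagnosed why this is hard --- the passage from distinguishing individual non-mutant knotoids to a uniform, set-level separation of $fpkspec$ over \emph{all} embeddings --- but diagnosing the obstacle is not overcoming it. What you have written is consistent with the paper's decision to leave this as a conjecture; it does not resolve it.
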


\begin{remark}
    The idea of studying the knot spectrum of a knot to distinguish the Kinoshita-Terasaka and Conway knots was proposed to Ken Millett and aspects of it were explored experimentally via the HOMFLY-PT polynomial in \cite{KTM}. 
\end{remark}

\subsection{The rate of change of an invariant in the neighborhood of a knot}

Notice that $f$ is differentiable in the neighborhood of a knot.
Let $K$ denote a knot embedding of a knot $\kappa$ and $l\in N(h,K_x)\subset N_h(K)$, both piecewise linear of $n$ vertices. Let $\vec{v}$ denote the vector that denotes the ordered difference of vertices in $\mathbb{R}^n$. Let $\vec{v}=K-l$ denote the vector in $\mathbb{R}^{3n}$ formed by subtracting the coordinates of the $i$th vertex of $K$ from that of $l$. The rate of change of $f$ at $K$, in the direction of $l$, $\vec{v}$, is $Df_{\vec{v}}(K)=D_{\vec{v}}p_0(K)f(\kappa)+\sum_{i\in pkspec(l)}D_{\vec{v}}p_i(K)f(l_i)$, where $l_i$ are the pure knotoids that appear in the spectrum of $l$ and $D_{\vec{v}}p_i(K)$ is the rate of change in the direction of $\vec{v}$ of the geometric probability that a projection of $l$ gives the knotoid $l_i$ and $p_0(K)$ is the geometric probability that $l$ gives the knot-type knotoid of type $\kappa$.

\begin{proposition}
   
    Let $f$ be a topological invariant that distinguishes all height one knotoids from any knot-type knotoid. The rate of change of $f$ in the space of knots differentiates the space of unknots from that of any non-trivial knot that satisfies the cosmetic crossing conjecture.
\end{proposition}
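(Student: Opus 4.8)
The plan is to mirror the proof of Theorem~\ref{h1}, but to track the \emph{derivative} of $f$ rather than the mere contents of the spectrum, so the first move is to recast the rate-of-change formula into a form in which the hypothesis on $f$ appears directly. Since the geometric probabilities of the knotoids in the spectrum of a fixed $l$ sum to one and, by Corollary~\ref{knottype}, exactly one of them is the knot-type knotoid, we have $p_0(K)+\sum_{i\in pkspec(l)}p_i(K)=1$. Differentiating gives $D_{\vec v}p_0(K)=-\sum_{i\in pkspec(l)}D_{\vec v}p_i(K)$, and substituting this into the definition of $Df_{\vec v}(K)$ cancels the knot-type term and yields
\begin{equation}
Df_{\vec v}(K)=\sum_{i\in pkspec(l)}D_{\vec v}p_i(K)\,\bigl(f(l_i)-f(\kappa)\bigr).
\end{equation}
Thus the rate of change is a weighted sum over the \emph{pure} knotoids of the spectrum, insensitive to the common value $f(\kappa)$ on the knot-type knotoid, and governed by how the pure-knotoid probabilities vary.

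For the unknot side I would reuse the embeddings from the proof of Theorem~\ref{Xk}: a generic $h$-deformation $U$ of the planar circle, whose based neighborhoods contain only the trivial knotoid. For every $l\in N_h(U)$ we then have $pkspec(l)=\emptyset$ and $p_0\equiv1$, so the displayed sum is empty; equivalently $f\equiv f(o)$ is constant on $N_h(U)$ and $Df_{\vec v}(U)=0$ for all directions $\vec v$. Hence the rate of change vanishes identically on the neighborhood of these unknots.

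For a non-trivial knot $\kappa$ satisfying the cosmetic crossing conjecture I would invoke Theorem~2.31 of \cite{Detoffoli2013} to produce a strongly essential crossing and then Lemma~\ref{strongly} to conclude that the spectrum of every embedding $K$ contains a height-one pure knotoid $l_{i_0}$. Because $f$ distinguishes every height-one knotoid from every knot-type knotoid, $f(l_{i_0})\neq f(\kappa)$, so the coefficient $f(l_{i_0})-f(\kappa)$ in the displayed formula is nonzero. It then remains only to exhibit a single direction $\vec v$ in which the weighted sum does not vanish.

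This last step is the main obstacle, since a priori the contributions of distinct pure knotoids could cancel for every direction. I would resolve it using the geometry behind Lemma~\ref{const}: each $p_i$ is a piecewise-smooth function of the vertex coordinates whose gradient is controlled by the motion of the great-circle arcs (in the sense of \cite{Banchoff1976}) bounding the spherical region of directions giving $l_i$. It therefore suffices to produce one perturbation $\vec v$ changing the area of the region of $l_{i_0}$ to first order. I would take $\vec v$ supported on the strand realizing the strongly essential crossing that produces $l_{i_0}$, so that the transition locus between its height-zero and height-one closures moves with a definite sign; by genericity the other essential secants occupy disjoint portions of the diagram, so the remaining pure-knotoid regions are stationary to first order and the sum reduces to $Df_{\vec v}(K)=D_{\vec v}p_{i_0}(K)\,\bigl(f(l_{i_0})-f(\kappa)\bigr)\neq0$. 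Consequently $Df_{\vec v}$ is not identically zero in the neighborhood of $K$, which, together with its identical vanishing on the chosen unknot neighborhoods, differentiates the space of unknots from that of any non-trivial knot satisfying the cosmetic crossing conjecture.
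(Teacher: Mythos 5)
Your proposal follows the same skeleton as the paper's proof: on the unknot side you take the $h$-deformations of the planar circle from Theorem~\ref{Xk}, whose neighborhoods carry only the trivial knotoid so that $f$ is constant and its rate of change vanishes; on the non-trivial side you combine the strongly essential secant of \cite{Detoffoli2013} with Lemma~\ref{strongly} to place a height-one pure knotoid in every spectrum, and then use the hypothesis that $f$ separates height-one knotoids from knot-type knotoids. Your reformulation $Df_{\vec v}(K)=\sum_{i\in pkspec(l)}D_{\vec v}p_i(K)\bigl(f(l_i)-f(\kappa)\bigr)$, obtained by differentiating $p_0+\sum_i p_i=1$, is a genuine improvement in presentation: it makes explicit that only the differences $f(l_i)-f(\kappa)$ matter, which is exactly where the hypothesis enters. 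The paper's own proof stops at ``the spectrum contains a height-one knotoid and $f$ distinguishes it, hence the rate of change is non-zero.''

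You are right to flag the remaining step — that the weighted sum could in principle vanish in every direction even though one coefficient $f(l_{i_0})-f(\kappa)$ is nonzero — as the main obstacle; the paper does not address it at all, so you are being more careful than the source. However, your resolution is not airtight. The regions of $S^2$ assigned to the knotoids in the spectrum are bounded by arcs of great circles determined by \emph{pairs} of edges of the curve (Lemma~\ref{const}), so a perturbation $\vec v$ supported on the strand through the strongly essential crossing moves every great circle involving an edge of that strand, and those circles may simultaneously bound the regions of several other pure knotoids. The claim that ``the remaining pure-knotoid regions are stationary to first order'' therefore does not follow from genericity alone, and even $D_{\vec v}p_{i_0}(K)\neq 0$ needs an argument (the area of the $l_{i_0}$-region could be critical under the particular perturbation you chose). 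To close this you would need either an explicit perturbation for which the first-order area changes are computed, or a transversality/genericity argument showing that the gradients $\{D p_i\}$ together with the values $\{f(l_i)-f(\kappa)\}$ cannot conspire to annihilate every directional derivative. As it stands, both your proof and the paper's leave this point open; yours has the merit of isolating it precisely.
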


\begin{proof}
    Following the proof of Theorem \ref{h1}, there exist embeddings of the unknot at the neighborhood of which the rate of change of the unknot is 0, but there is no such neighborhood for any ambedding of any non-trivial knot, as any embedding of a non-trivial knot that satisfies the cosmetic crossing conjecture contains at least one strongly essential secant that gives rise a height 1 knotoid. If $f$ can distinguish all height one knotoids from all knot-type knotoids, then the rate of change of $f$ is non-zero. 
\end{proof}

\section{Conclusion}

In this manuscript, we introduce a novel framework for studying knots by studying the simple open curves in their neighborhood, which capture both geometrical and topological information about the knot.

We prove that the open neighborhood of a knot can be associated to a knotoid spectrum and can identify the knot type via the knot-type knotoid in the spectrum. More importantly, we prove that, at least for some knots, the pure knotoids in their spectra can fully distinguish them. Namely, we prove that for any two embeddings of any two knots that differ by more than one crossing change, at  some pure knotoids of specific height in their spectra are different. Moreover, for knots with Gordian distance greater than 3, and for any of their generic embeddings, all the pure knotoids in the knotoid spectra are different. We also prove that at least for some embeddings of the unknot, the pure knotoids in their pure neighborhood knotoid spectra are different than those of any embedding of any non-trivial knot that satisfy the cosmetic crossing conjecture.

Via a method that was introduced in \cite{Panagiotou2020b}, invariants of knots can be extended to be defined to open curves in 3-space and therefore also to the open curve neighborhood of knots.  These are continuous functions that tend to the topological invariant at the knot. We prove that invariants of knots that fail to distinguish certain knots, may be able to distinguish them by their values at their neighborhoods. For example, we prove that an invariant that cannot distinguish mutant knots and mutant knotoids, may be able to distinguish the neighborhoods of certain embeddings of mutant knots via the knotoid spectra, unless it also fails to distinguish non-mutant knotoids in their spectra.  We also prove that there are embeddings of the unknot for which the rate of change of a topological measure at their neighborhood is zero, while for any embedding of a non-trivial knot that satisfies the cosmetic crossing conjecture, the rate of change is non-zero, unless it fails to distinguish height one knotoids in their spectra from knot-type knotoids. 

The method presented in this manuscript is general and can be adapted to derive from the neighborhood of a knot, a knot spectrum, a strongly invertible knot spectrum, a linkoid or link spectrum, etc, thereby enabling many well studied areas in knot theory to be applied to a knot neighborhood to capture more information about a knot. 


\section{Appendix A: Linkoid spectrum of a knot}

In this section we discuss the definition of the linkoid spectrum of a knot. 

A linkoid is a multi-component knotoid. Linkoids are also connected to virtual knots via closure of the components. However, a virtual closure of linkoids requires a specification of the endpoints that are connected. This has been explored in detail in \cite{Barkataki2024}.
    
\begin{definition}\label{openneighborhood2}
Let $K$ denote a piecewise linear simple closed curve in 3-space (the embedding of a knot) and let $\textbf{x}=\lbrace{x_1,\dotsc,x_m\rbrace}\in K$, $m$ vertices on the knot. Let us denote by $K_{\textbf{x}}$ the collection of open curves obtained by deleting the edges incoming to $x_1,\dotsc,x_m$, respectively.  The based $h_{\textbf{x}}-$neighborhood of the knot embedding at $x_1,\dotsc,x_m$, in the space of simple curves in 3-space defined as $N_{h_{\textbf{x}},\textbf{x}}(K)=\lbrace{l\in X|D(l,K_{\textbf{x}})<h\rbrace}$, where $l$ is a collection of open curves in 3-space, $h_{\textbf{x}}$ is the minimum value such that $l$ has the same linkoid spectrum as $K_{\textbf{x}}$ and for which there exists an ambient isotopy from $K_{x}$, leaving its endpoints fixed and not passing through the corresponding closure arcs.  The $h-$neighborhood of the knot embedding $K$, $N_h(K)$, is the union of all the based neighborhoods of the knot embedding, ie. $N_h(K)=\cup\lbrace N_{h_{\textbf{x}},\textbf{x}}|\textbf{x}=\lbrace{x_1,\dotsc,x_m\rbrace}\in K\rbrace$, where $0<h<h_{\textbf{x}}$. Notice that $N_{h_{textbf{x}},\textbf{x}}(K)\subset N_h(K)$. 

Let $K$ denote an embedding of a knot $\kappa$ and $x_1,\dotsc,x_m$ points on $K$. We will define the linkoid spectrum of $K$ to be the union of the linkoid spectra of all the collections of open curves in $N_{h}(K)$. We denote $lspec(K)=\cup_{l\in N_h(K)}\lbrace lspec(l)\rbrace$.  
\end{definition}


\section{Acknowledgements}

The author acknowledges support by the National Science Foundation (Grant No. DMS-1913180 and NSF CAREER 2047587).


\bibliographystyle{plain} 
\bibliography{main}

\end{document}